\theoremstyle{plain}
\newtheorem{proposition}{Proposition}[section]
\newtheorem{theorem}[proposition]{Theorem}
\newtheorem{corollary}[proposition]{Corollary}
\theoremstyle{plain}
\newtheorem{definition}[proposition]{Definition}
\newtheorem{remark}[proposition]{Remark}
\newtheorem{example}[proposition]{Example}
\theoremstyle{nonumberplain}
\newtheorem{proof}{Proof}
\newcommand{\ZZ}{{\mathbb Z}}
\newcommand{\RR}{{\mathbb R}}
\newcommand{\CC}{{\mathbb C}}
\newcommand{\PP}{{\mathbb P}}
\newcommand{\LL}{{\mathbb L}}
\newcommand{\Hom}{{\rm Hom}}
\renewcommand{\d}{{\rm dim}}
\newcommand{\cd}{{\rm codim}}
\newcommand{\Vol}{{\rm Vol}}
\newcommand{\vol}{{\rm vol}}
\newcommand{\Eu}{{\rm Eu}}
\newcommand{\e}{\varepsilon}
\renewcommand{\SS}{{\mathcal S}}
\newcommand{\Spec}{{\rm Spec}}
\newcommand{\id}{{\rm id}}
\newcommand{\supp}{{\rm supp}}
\newcommand{\RSV}{{\rm RSV}}
\newcommand{\grad}{{\rm grad}}
\newcommand{\Db}{{\bf D}^{b}}
\newcommand{\Dbc}{{\bf D}_{c}^{b}}
\newcommand{\D}{{\cal D}}
\newcommand{\F}{{\cal F}}
\newcommand{\G}{{\cal G}}
\renewcommand{\L}{{\mathcal L}}
\newcommand{\M}{{\mathcal M}}
\renewcommand{\O}{{\cal O}}
\newcommand{\CF}{{\rm CF}}
\newcommand{\reg}{{\rm reg}}
\newcommand{\RG}{R\varGamma}
\newcommand{\Rhom}{R{\mathcal Hom}}
\newcommand{\tl}[1]{\widetilde{#1}}
\newcommand{\simto}{\overset{\sim}{\longrightarrow}}
\newcommand{\dsum}{\displaystyle \sum}
\renewcommand{\(}{\left(}
\renewcommand{\)}{\right)}
\newcommand{\inun}{\text{\rotatebox[origin=c]{90}{$\in$}}}
\newcommand{\longhookrightarrow}{\DOTSB\lhook\joinrel\longrightarrow}
\newcommand{\longtwoheadrightarrow}{\relbar\joinrel\twoheadrightarrow}
\renewcommand{\labelenumi}{{\rm (\roman{enumi})}}
\title{A geometric degree formula for $A$-discriminants and Euler obstructions of toric varieties\footnote{{\bf 2000 Mathematics Subject Classification: }14M25, 14N05, 32S60, 33C70, 35A27}}
\author{Yutaka \textsc{Matsui}\footnote{Department of Mathematics, Kinki University, 3-4-1, Kowakae, Higashi-Osaka, Osaka, 577-8502, Japan.} \and Kiyoshi \textsc{Takeuchi}\footnote{Institute of Mathematics, University  of Tsukuba, 1-1-1, Tennodai, Tsukuba, Ibaraki, 305-8571, Japan.}}
\date{}
\begin{document}

\maketitle

\begin{abstract}
We give explicit formulas for the dimensions and the degrees of $A$-discriminant varieties introduced by Gelfand-Kapranov-Zelevinsky \cite{G-K-Z}. Our formulas can be applied also to the case where the $A$-discriminant varieties are higher-codimensional and their degrees are described by the geometry of the configurations $A$. Moreover combinatorial formulas for the Euler obstructions of general (not necessarily normal) toric varieties will be also given.
\end{abstract}

\section{Introduction}\label{sec:1}

The theory of discriminants is on the crossroad of various branches of mathematics, such as commutative algebra, algebraic geometry, singularity theory and topology. In \cite{G-K-Z}, Gelfand-Kapranov-Zelevinsky generalized this classical theory to polynomials of several variables by introducing $A$-discriminant varieties and obtained many deep results. They thus laid the foundation of the modern theory of discriminants. The first aim of this paper is to give formulas for the dimensions and the degrees of $A$-discriminant varieties. Let $A$ be a finite subset a lattice $M=\ZZ^n$. Then the $A$-discriminant variety $X_A^*$ is the dual of a projective variety $X_A \subset \PP^{\sharp A-1}$ over $\CC$ defined by $A$ (see \cite{G-K-Z} and Section \ref{sec:2} for the definition). Let $P$ be the convex hull of $A$ in $M_{\RR}=\RR \otimes_{\ZZ} M$ and assume that $\d P=n$. For each face $\Delta \prec P$ of the polytope $P$, we denote by $\Vol_{\ZZ}(\Delta) \in \ZZ$ the normalized ($\d \Delta$)-dimensional volume (see \eqref{eq:3-3}) of $\Delta$ with respect to the affine sublattice $M(A \cap \Delta)$ of $M$ generated by $A \cap \Delta$. Recall that the algebraic torus $T=\Spec(\CC[M]) \simeq (\CC^*)^n$ naturally acts on $X_A$ with finitely many orbits and there exists a basic correspondence ($0 \leq k \leq n=\d P$):
\begin{equation}
\{ \text{$k$-dimensional faces of $P$}\} \overset{\text{1:1}}{\longleftrightarrow} \{ \text{$k$-dimensional $T$-orbits in $X_A$} \}
\end{equation}
proved by \cite[Chapter 5, Proposition 1.9]{G-K-Z}. For a face $\Delta \prec P$ of $P$, we denote by $T_{\Delta}$ the $T$-orbit in $X_A$ which corresponds to $\Delta$. Then our first result is as follows.

\begin{theorem}\label{thm:1-1}
For $1 \leq i \leq \sharp A-1$, set
\begin{equation}
\delta_i :=\sum_{\Delta \prec P}(-1)^{\cd \Delta} \left\{\binom{\d\Delta-1}{i} +(-1)^{i-1}(i+1)\right\}\Vol_{\ZZ}(\Delta) \cdot \Eu(\Delta),
\end{equation}
where $\Eu(\Delta)$ is the value of the Euler obstruction $\Eu_{X_A} \colon X_A \longrightarrow \ZZ$ of $X_A$ on the $T$-orbit $T_{\Delta} \simeq (\CC^*)^{\d \Delta}\subset X_A$. For the meaning of the binomial coefficient $\binom{\d\Delta-1}{i}$, see Remark \ref{rem:2-5} (i). Then the codimension $r=\cd X_A^*=\sharp A-1-\d X_A^*$ and the degree of the $A$-discriminant variety $X_A^*$ are given by
\begin{eqnarray}
r= \cd X_A^* &=& \min \{i \ |\ \delta_i \neq 0\},\\
\deg X_A^* &=& \delta_r.
\end{eqnarray}
\end{theorem}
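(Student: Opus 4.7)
The plan is to deduce this from the general class formula of Kleiman--Katz--Ernstr\"om for the codimension and the degree of the projective dual of a (possibly singular) projective variety, specialized to the toric setting via the $T$-orbit stratification of $X_{A}$. First I would recall that for any closed subvariety $Y\subset \PP^{N}$, the codimension $r=\cd Y^{*}$ is the smallest integer $i\geq 1$ for which a certain polar number $d_{i}(Y)$ is non-zero, and in that case $d_{r}(Y)=\deg Y^{*}$. When $Y$ is singular, the Kleiman--Katz identity
$$\deg Y^{*}=\sum_{i}(-1)^{i}(i+1)\deg(h^{\d Y-i}\cdot c^{M}_{i}(Y)),$$
valid whenever $Y^{*}$ is a hypersurface, admits a refinement in which each polar number $d_{i}(Y)$ is expressed as an intersection number against the Chern--Mather class $c^{M}_{*}(Y)$. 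By MacPherson's formula, $c^{M}_{*}(Y)$ in turn decomposes as a linear combination of push-forwards of Chern--Mather classes of the closures of strata, weighted by local Euler obstructions, so that each $d_{i}(Y)$ breaks up as a sum over the strata.

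Next, I would apply this to $Y=X_{A}$ with its canonical stratification $X_{A}=\bigsqcup_{\Delta\prec P}T_{\Delta}$, which is a Whitney stratification by the basic correspondence recalled in the introduction. Two toric facts drive the computation: (a) by $T$-equivariance the Euler obstruction $\Eu_{X_{A}}$ is constant on each orbit $T_{\Delta}$, with value $\Eu(\Delta)$; (b) the closure $\overline{T_{\Delta}}$ is itself the projective toric variety $X_{A\cap\Delta}\subset\PP^{\sharp (A\cap\Delta)-1}$ of dimension $\d\Delta$, whose top hyperplane self-intersection equals the normalized volume $\Vol_{\ZZ}(\Delta)$ by the standard degree formula for projectively embedded toric varieties. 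Substituting these two inputs into the class formula turns each $d_{i}(X_{A})$ into a sum of the shape $\sum_{\Delta\prec P}c_{i}(\Delta)\,\Vol_{\ZZ}(\Delta)\,\Eu(\Delta)$ with rational coefficients $c_{i}(\Delta)$ depending only on $i$, $\d\Delta$ and $\cd\Delta$.

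The third step is to simplify $c_{i}(\Delta)$ combinatorially. The overall sign $(-1)^{\cd\Delta}$ is produced by the inclusion--exclusion needed to invert the Euler-obstruction relations and express the Mather class of $X_{A}$ through Mather classes of the closures $\overline{T_{\Delta}}$. The bracket $\binom{\d\Delta-1}{i}+(-1)^{i-1}(i+1)$ comes from combining the $(-1)^{i}(i+1)$ weight of the Katz polar term with the expansion of $h^{\d\Delta-i}\cdot c^{M}_{i}(\overline{T_{\Delta}})$ in powers of $h$ restricted to $\overline{T_{\Delta}}$: for a projectively embedded toric variety the relevant Segre-type generating function is $(1+h)^{\d\Delta-1}$, whose $i$-th coefficient is precisely $\binom{\d\Delta-1}{i}$, while $(-1)^{i-1}(i+1)$ records the residual contribution of the Katz boundary term. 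Collecting everything gives $d_{i}(X_{A})=\delta_{i}$, and the principles $r=\min\{i\mid d_{i}(X_{A})\neq 0\}$ and $\deg X_{A}^{*}=d_{r}(X_{A})$ then yield both formulas.

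The main obstacle will be establishing the exact coefficient $\binom{\d\Delta-1}{i}+(-1)^{i-1}(i+1)$: the Kleiman--Katz formula is classically stated only in the hypersurface case, so I would first need to extend it to arbitrary codimension and then reconcile the generating-function computation for the Chern--Mather classes of the projective toric closures $\overline{T_{\Delta}}$ with the face-wise inclusion--exclusion, keeping the signs consistent throughout. As a sanity check, in the hypersurface case $i=r=1$ the bracket collapses to $(\d\Delta-1)+2=\d\Delta+1$, and the formula recovers the classical Gelfand--Kapranov--Zelevinsky expression $\deg X_{A}^{*}=\sum_{\Delta}(-1)^{\cd\Delta}(\d\Delta+1)\,\Vol_{\ZZ}(\Delta)\,\Eu(\Delta)$, which is reassuring. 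Once the general combinatorial identity is in place, the toric substitution is immediate and the theorem follows.
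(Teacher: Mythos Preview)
Your strategy via Chern--Mather classes is a plausible alternative, but it diverges from the paper's argument and, as written, has a real gap at the decisive step.

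The paper does not compute any Chern--Mather classes of the stratum closures $\overline{T_{\Delta}}$. Instead it invokes Ernstr\"om's formula in its \emph{Euler-integral} form,
\[
\delta_i=(-1)^{n+i-1}\Bigl\{\,i\!\int_{\PP^m}\Eu_{X_A}-(i+1)\!\int_{H}\Eu_{X_A}+\int_{H_{i+1}}\Eu_{X_A}\Bigr\},
\]
with $H$, $H_{i+1}$ generic linear subspaces of codimension $1$ and $i+1$. Since $\Eu_{X_A}$ is constant on each $T$-orbit, each integral decomposes as $\sum_{\Delta}\Eu(\Delta)\,\chi(T_{\Delta}\cap H_{\bullet})$, and the identification $T_{\Delta}\simeq(\CC^*)^{\d\Delta}$ reduces $\chi(T_{\Delta}\cap H_{\bullet})$ to the Euler characteristic of a generic complete intersection in a torus. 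That number is supplied by the Bernstein--Khovanskii--Kushnirenko theorem: for $p$ generic Laurent polynomials with Newton polytope $\Delta$ one has $\chi=(-1)^{\d\Delta-p}\binom{\d\Delta-1}{p-1}\Vol_{\ZZ}(\Delta)$. This is precisely where the binomial coefficient $\binom{\d\Delta-1}{i}$ and the factor $\Vol_{\ZZ}(\Delta)$ come from; the remaining term $(-1)^{i-1}(i+1)$ arises from combining the three integrals above. No inclusion--exclusion over stratum closures and no computation of $c^{M}_{*}(\overline{T_{\Delta}})$ is needed.

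In your outline, the step ``the relevant Segre-type generating function is $(1+h)^{\d\Delta-1}$'' is the place where the argument is not yet a proof. The closures $\overline{T_{\Delta}}=X_{A\cap\Delta}$ are themselves singular toric varieties, so their Chern--Mather classes are not given by any such closed formula; computing them would reintroduce Euler obstructions of the smaller toric varieties and lead to a recursion you have not set up. What actually produces $\binom{\d\Delta-1}{i}$ is the BKK count above, applied orbit by orbit rather than closure by closure. If you want to salvage the Chern-class route, you should replace the decomposition over $\overline{T_{\Delta}}$ by a decomposition over the open orbits $T_{\Delta}$ (using $c_{*}^{SM}(\mathbf{1}_{T_{\Delta}})$ rather than $c^{M}_{*}(\overline{T_{\Delta}})$), and then either quote BKK or the Ehlers--Barthel--Brasselet--Fieseler description of Chern classes of toric varieties to evaluate the hyperplane intersections; at that point the argument essentially collapses to the paper's.
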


The above theorem will be proved by using Ernstr{\"o}m's degree formula for dual varieties in \cite{Ernstrom} and a result in \cite{M-T}. Note that very recently by using tropical algebraic geometry, also Dickenstein-Feichtner-Sturmfels \cite{D-F-S} obtained a degree formula for the $A$-discriminant variety $X_A^*$ when $X_A^*$ is a hypersurface. Our formula is applicable also to the case where $X_A^*$ is higher-codimensional. Moreover, our formula is more directly related to the geometry of the convex polytope $P$. In particular, if $X_A$ is smooth, our formula coincides with Gelfand-Kapranov-Zelevinsky's theorem \cite[Chapter 9, Theorem 2.8]{G-K-Z}. In fact, if $X_A^*$ is a hypersurface the above formula is implicit in their prime factorization theorem \cite[Chapter 10, Theorem 1.2]{G-K-Z}, although they did not formulate it in terms of Euler obstructions. In Section \ref{sec:4}, we will give combinatorial formulas for the Euler obstruction $\Eu_{X_A} \colon X_A \longrightarrow \ZZ$ of $X_A$. Combining them with Theorem \ref{thm:1-1} above, we can now calculate the dimension and the degree of $X_A^*$ for any configuration $A \subset M=\ZZ^n$ (see Example \ref{exa:6-8} in Section \ref{sec:4}). Recently in \cite{Esterov} Esterov found some nice applications of our formula. 

Our functorial proof of the formula for the Euler obstruction $\Eu_{X_A} \colon X_A \longrightarrow \ZZ$ leads us to various applications. In Section \ref{sec:7}, we derive from it useful formulas (Theorem \ref{thm:7-3} and \ref{thm:7-4}) for the characteristic cycles of $T$-equivariant constructible sheaves on general (not necessarily normal) toric varieties. See \cite{Braden} for another approach to this problem. In particular, combining it with the combinatorial description of the intersection cohomology complexes of toric varieties obtained by Bernstein-Khovanskii-MacPherson (unpublished), Denef-Loeser \cite{D-L} and Fieseler \cite{Fieseler} etc., we can derive combinatorial formulas for the characteristic cycle of the intersection cohomology complex of any normal toric variety. See Section \ref{sec:7} for the detail. Note that in \cite[Chapter 10, Theorem 2.11]{G-K-Z} also Gelfand-Kapranov-Zelevinsky obtained a formula for the characteristic cycles in a special but important case, from which they could have obtained the same result by some generalization (or a reformulation). However we included here a proof of Theorem \ref{thm:7-3} and \ref{thm:7-4}, since we can not find such an explicit presentation in the literature. We hope that we could show the power and the beauty of the sheaf-theoretical methods (see for example, \cite{Dimca}, \cite{H-T-T} and \cite{K-S}) by proving them functorially. Also, in the proof of Theorem \ref{thm:6-7}, we gave an explicit description \eqref{eq:4-53} of the branches along torus orbits in non-normal toric varieties found in \cite[Chapter 5, Theorem 3.1]{G-K-Z} and clarified the treatment of non-normal toric varieties in \cite[Chapter 5]{G-K-Z}. This result would have many applications in the future. Finally, let us mention that combining our combinatorial description of the Euler obstructions of toric varieties with the result of Ehlers (unpublished) and Barthel-Brasselet-Fieseler \cite{B-B-F} we can now compute the Chern-Mather classes of complete toric varieties very easily.

\bigskip
\noindent{\bf Acknowledgement:} The authors would like to thank Professor Esterov for some useful discussions.

\section{Preliminary notions and results}\label{sec:pre}

In this section, we introduce basic notions and results which will be used in this paper. In this paper, we essentially follow the terminology of \cite{Dimca}, \cite{H-T-T} and \cite{K-S}. For example, for a topological space $X$ we denote by $\Db(X)$ the derived category whose objects are bounded complexes of sheaves of $\CC_X$-modules on $X$.

\begin{definition}
Let $X$ be an algebraic variety over $\CC$. Then
\begin{enumerate}
\item We say that a sheaf $\F$ on $X$ is constructible if there exists a stratification $X=\bigsqcup_{\alpha} X_{\alpha}$ of $X$ such that $\F|_{X_{\alpha}}$ is a locally constant sheaf of finite rank for any $\alpha$.
\item We say that an object $\F$ of $\Db(X)$ is constructible if the cohomology sheaf $H^j(\F)$ of $\F$ is constructible for any $j \in \ZZ$. We denote by $\Dbc(X)$ the full subcategory of $\Db(X)$ consisting of constructible objects $\F$.
\end{enumerate}
\end{definition}

Recall that for any morphism $f \colon X \longrightarrow Y$ of algebraic varieties over $\CC$ there exists a functor
\begin{equation}
Rf_* \colon \Db(X) \longrightarrow \Db(Y)
\end{equation}
of direct images. This functor preserves the constructibility and we obtain also a functor
\begin{equation}
Rf_* \colon \Dbc(X) \longrightarrow \Dbc(Y).
\end{equation}
For other basic operations $Rf_!$, $f^{-1}$, $f^!$ etc. in derived categories, see \cite{K-S} for the detail.

Next we introduce the notion of constructible functions. 

\begin{definition}
Let $X$ be an algebraic variety over $\CC$. Then we say a $\ZZ$-valued function $\rho \colon X \longrightarrow \ZZ$ on $X$ is constructible if there exists a stratification $X=\bigsqcup_{\alpha} X_{\alpha}$ of $X$ such that $\rho|_{X_{\alpha}}$ is constant for any $\alpha$. We denote by $\CF_{\ZZ}(X)$ the abelian group of constructible functions on $X$.
\end{definition}

For a constructible function $\rho \colon X \longrightarrow \ZZ$ take a stratification $X=\bigsqcup_{\alpha}X_{\alpha}$ of $X$ such that $\rho|_{X_{\alpha}}$ is constant for any $\alpha$ as above. Denoting the Euler characteristic of $X_{\alpha}$ by $\chi(X_{\alpha})$ we set
\begin{equation}
\int_X \rho :=\dsum_{\alpha}\chi(X_{\alpha}) \cdot \rho(x_{\alpha}) \in \ZZ,
\end{equation}
where $x_{\alpha}$ is a reference point in $X_{\alpha}$. Then we can easily show that $\int_X\rho \in \ZZ$ does not depend on the choice of the stratification $X=\bigsqcup_{\alpha} X_{\alpha}$ of $X$. We call $\int_X \rho \in \ZZ$ the topological (Euler) integral of $\rho$ over $X$.

Among various operations in derived categories, the following nearby and vanishing cycle functors introduced by Deligne will be frequently used in this paper (see \cite[Section 4.2]{Dimca} for an excellent survey of this subject).

\begin{definition}
Let $f \colon X \longrightarrow \CC$ be a non-constant regular function on an algebraic variety $X$ over $\CC$. Set $X_0:= \{x\in X\ |\ f(x)=0\} \subset X$ and let $i_X \colon X_0 \longhookrightarrow X$, $j_X \colon X \setminus X_0 \longhookrightarrow X$ be inclusions. Let $p \colon \tl{\CC^*} \longrightarrow \CC^*$ be the universal covering of $\CC^* =\CC \setminus \{0\}$ ($\tl{\CC^*} \simeq \CC$) and consider the Cartesian square
\begin{equation}\label{eq:pre-1}
\xymatrix@R=2.5mm@C=2.5mm{
\tl{X \setminus X_0} \ar[rr] \ar[dd]^{p_X} & &\tl{\CC^*} \ar[dd]^p \\
 & \Box & \\
X \setminus X_0 \ar[rr]^f & & \CC^*.}
\end{equation}
Then for $\F \in \Db(X)$ we set
\begin{equation}
\psi_f(\F) := i_X^{-1}R(j_X \circ p_X)_*(j_X \circ p_X)^{-1}\F \in \Db(X_0)
\end{equation}
and call it the nearby cycle of $\F$. We also define the vanishing cycle $\varphi_f(\F)\in \Db(X_0)$ of $\F$ to be the third term of the distinguished triangle:
\begin{equation}
i_X^{-1}\F \overset{{\rm can}}{\longrightarrow} \psi_f(\F) \longrightarrow \varphi_f(\F) \overset{+1}{\longrightarrow}
\end{equation}
in $\Db(X_0)$, where ${\rm can} \colon i_X^{-1}\F \longrightarrow \psi_f(\F)$ is the canonical morphism induced by $\id \longrightarrow R(j_X \circ p_X)_*(j_X \circ p_X)^{-1}$.
\end{definition}

Since nearby and vanishing cycle functors preserve the constructibility, in the above situation we obtain functors
\begin{equation}
\psi_f, \ \varphi_f \colon \Dbc(X) \longrightarrow \Dbc(X_0).
\end{equation}

The following theorem will play a crucial role in this paper. For the proof, see for example, \cite[Proposition 4.2.11]{Dimca}.

\begin{theorem}\label{thm:pre-10}
Let $\pi \colon Y \longrightarrow X$ be a proper morphism of algebraic varieties over $\CC$ and $f \colon X \longrightarrow \CC$ a non-constant regular function on $X$. Set $g:=f \circ \pi \colon Y \longrightarrow \CC$, $X_0:=\{x\in X\ |\ f(x)=0\}$ and $Y_0:=\{y\in Y\ |\ g(y)=0\}=\pi^{-1}(X_0)$. Then for any $\G\in \Dbc(Y)$ we have
\begin{eqnarray}
R (\pi |_{Y_0})_* \psi_g(\G) &=&\psi_f(R\pi_*\G),\\
R (\pi |_{Y_0})_* \varphi_g(\G) &=& \varphi_f(R\pi_*\G)
\end{eqnarray}
in $\Dbc(X_0)$, where $ \pi|_{Y_0} \colon Y_0 \longrightarrow X_0$ is the restriction of $\pi$.
\end{theorem}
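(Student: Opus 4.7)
The plan is to unwind the definitions of $\psi_g$ and $\psi_f$ in terms of the universal cover construction \eqref{eq:pre-1} and reduce everything to two applications of proper base change together with the functoriality of $R(-)_*$ under composition. The essential input is that every base change of the proper map $\pi$ is again proper, so $R\pi_! = R\pi_*$ and proper base change is freely available.

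First, I would build the enlarged commutative diagram by combining the Cartesian square \eqref{eq:pre-1} for $f$ with the analogous one for $g = f \circ \pi$. This produces a lift $\tl{\pi} \colon \tl{Y\setminus Y_0} \longrightarrow \tl{X\setminus X_0}$ of $\pi|_{Y \setminus Y_0}$ covering the identity of $\tl{\CC^*}$, and makes the square with vertical maps $j_Y \circ p_Y$ and $j_X \circ p_X$ Cartesian. Starting from $(j_X \circ p_X)^{-1} R\pi_* \G$, proper base change on this outer square gives
\begin{equation*}
(j_X \circ p_X)^{-1} R\pi_* \G \;\simeq\; R\tl{\pi}_*\, (j_Y \circ p_Y)^{-1}\G.
\end{equation*}
Applying $R(j_X \circ p_X)_*$ and using $R(j_X \circ p_X)_* \circ R\tl{\pi}_* = R\pi_* \circ R(j_Y \circ p_Y)_*$ (which is just $R(-)_*$ applied to the two descriptions of the same composed morphism), one obtains
\begin{equation*}
R(j_X\circ p_X)_*(j_X\circ p_X)^{-1} R\pi_*\G \;\simeq\; R\pi_*\, R(j_Y\circ p_Y)_*(j_Y\circ p_Y)^{-1}\G.
\end{equation*}
Finally, a second application of proper base change to the Cartesian square $i_Y\colon Y_0\hookrightarrow Y$, $i_X\colon X_0\hookrightarrow X$, $\pi|_{Y_0}$, $\pi$ gives $i_X^{-1}R\pi_* = R(\pi|_{Y_0})_* i_Y^{-1}$, and pulling back by $i_X^{-1}$ yields the claimed isomorphism $\psi_f(R\pi_*\G) \simeq R(\pi|_{Y_0})_* \psi_g(\G)$.

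For the vanishing cycle, I would apply the functor $R(\pi|_{Y_0})_*$ to the distinguished triangle $i_Y^{-1}\G \xrightarrow{\mathrm{can}} \psi_g(\G) \to \varphi_g(\G) \xrightarrow{+1}$ and compare with the triangle defining $\varphi_f(R\pi_*\G)$. The first two terms are identified by the proper base change $R(\pi|_{Y_0})_* i_Y^{-1}\G \simeq i_X^{-1}R\pi_*\G$ and by the nearby cycle statement just proved. To conclude by the axiom TR3 of triangulated categories it remains to verify that these identifications intertwine the two canonical morphisms; this is exactly the step I expect to be the main obstacle, and the one place where one must chase a diagram rather than just cite base change. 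The verification is however forced by naturality of the unit $\id \to R(j \circ p)_*(j \circ p)^{-1}$ of the adjunction, applied to the morphism $\pi$ connecting the $Y$-side to the $X$-side; once this naturality square is written down, the compatibility of $\mathrm{can}$ follows formally, and TR3 provides an isomorphism on the third vertex, completing the proof for $\varphi$.
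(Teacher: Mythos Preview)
Your argument is correct and is the standard proof via proper base change; the two Cartesian squares you set up are indeed Cartesian (since $Y\setminus Y_0=\pi^{-1}(X\setminus X_0)$ one has $\tl{Y\setminus Y_0}\simeq Y\times_X\tl{X\setminus X_0}$), the lift $\tl{\pi}$ is a base change of $\pi$ and hence proper, and your handling of the compatibility of the canonical maps via naturality of the adjunction unit is the right way to invoke TR3. Note, however, that the paper does not give its own proof of this theorem at all: it simply refers the reader to \cite[Proposition~4.2.11]{Dimca}, whose proof is essentially the one you have written.
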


Let us recall the definition of characteristic cycles of constructible sheaves. Let $X$ be a smooth algebraic variety over $\CC$ and $\F \in \Dbc(X)$. Then there exists a Whitney stratification $X=\bigsqcup_{\alpha} X_{\alpha}$ of $X$ consisting of connected strata $X_{\alpha}$ such that $H^j(\F)|_{X_{\alpha}}$ is a locally constant sheaf for any $j \in \ZZ$ and $\alpha$. For a point $x_{\alpha} \in X_{\alpha}$, take a holomorphic function $f \colon U_{\alpha} \longrightarrow \CC$ defined in a neighborhood $U_{\alpha}$ of $x_{\alpha}$ in $X$ which satisfies the conditions
\begin{enumerate}
\item $f(x_{\alpha})=0$,
\item $(x_{\alpha};\grad f(x_{\alpha}))\in T_{X_{\alpha}}^*X \setminus \(\bigcup_{\beta \neq \alpha}\overline{T_{X_{\beta}}^*X}\)$,
\item $x_{\alpha} \in X_{\alpha}$ is a non-degenerate critical point of $f|_{X_{\alpha}}$,
\end{enumerate}
and set
\begin{eqnarray}
m_{\alpha}
&:=& -\chi(\varphi_f(\F)_{x_{\alpha}}) \\
&=& -\dsum_{j \in \ZZ} (-1)^j \d_{\CC}\(H^j(\varphi_f(\F))_{x_{\alpha}}\) \in \ZZ.
\end{eqnarray}
Then we can show that the integer $m_{\alpha}$ does not depend on the choice of the stratification $X=\bigsqcup_{\alpha} X_{\alpha}$, $x_{\alpha}\in X_{\alpha}$ and $f$.

\begin{definition}
By using the above integers $m_{\alpha} \in \ZZ$, we define a Lagrangian cycle $CC(\F)$ in the cotangent bundle $T^*X$ of $X$ by
\begin{equation}
CC(\F) :=\dsum_{\alpha}m_{\alpha} \left[ \overline{T_{X_{\alpha}}^*X} \right].
\end{equation}
We call $CC(\F)$ the characteristic cycle of $\F \in \Dbc(X)$. Its coefficient $m_{\alpha} \in \ZZ$ is called the multiplicity of $\F$ along the Lagrangian subvariety $\overline{T_{X_{\alpha}}^*X} \subset T^*X$.
\end{definition}

Recall that in $\Dbc(X)$ there exists a full abelian subcategory ${\rm Perv}(X)$ of perverse sheaves (see \cite{H-T-T} and \cite{K-S} etc. for the detail of this subject). Although for the definition of perverse sheaves there are some different conventions of shifts in the literature, here we adopt the one in \cite{H-T-T} by which the shifted constant sheaf $\CC_X[\d X]\in \Dbc(X)$ on a smooth algebraic variety $X$ is perverse. Then for any perverse sheaf $\F \in {\rm Perv}(X)\subset \Dbc(X)$ on a smooth algebraic variety $X$ we can easily show that the multiplicities in the characteristic cycle $CC(\F)$ of $\F$ are non-negative.

\begin{example}
Let $X=\CC_x^n$ and $Y=\{x_1=\cdots =x_d=0\} \subset X=\CC_x^n$. Set $\F:=\CC_Y[n-d] \in {\rm Perv}(X)$. Then by an easy computation
\begin{equation}
m=-\chi(\varphi_f(\CC_Y[n-d])_0)=1
\end{equation}
for $f(x)=x_1+x_{d+1}^2+\cdots +x_n^2$ at $0 \in Y\subset X=\CC_x^n$ we obtain
\begin{equation}
CC(\F)=1 \cdot [T_Y^*X].
\end{equation}
\end{example}

Finally, we recall a special case of Bernstein-Khovanskii-Kushnirenko's theorem \cite{Khovanskii}. 

\begin{definition}
Let $g(x)=\sum_{v \in \ZZ^n} a_vx^v$ be a Laurent polynomial on $(\CC^*)^n$ ($a_v\in \CC$). We call the convex hull of $\supp(g):=\{v\in \ZZ^n \ |\ a_v\neq 0\} \subset \ZZ^n \subset \RR^n$ in $\RR^n$ the Newton polygon of $g$ and denote it by $NP(g)$.
\end{definition}

\begin{theorem}[\cite{Khovanskii}]\label{thm:2-10}
Let $\Delta$ be an integral polytope in $\RR^n$ and $g_1, \ldots, g_p$ generic Laurent polynomials on $(\CC^*)^n$ satisfying $NP(g_i)=\Delta$. Then the Euler characteristic of the subvariety $Z^*=\{ x\in (\CC^*)^n \ |\ g_1(x)=\cdots =g_p(x)=0 \}$ of $(\CC^*)^n$ is given by
\begin{equation}
\chi(Z^*)=(-1)^{n-p}\binom{n-1}{p-1}\Vol_{\ZZ}(\Delta),
\end{equation}
where $\Vol_{\ZZ}(\Delta)\in \ZZ$ is the normalized $n$-dimensional volume of $\Delta$ with respect to the lattice $\ZZ^n \subset \RR^n$.
\end{theorem}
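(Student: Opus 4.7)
I would prove the formula by induction on the codimension $p$. A preliminary reduction: multiplying each $g_i$ by a Laurent monomial $x^{v_i}$ preserves both $Z^*$ and $\Vol_\ZZ(\Delta)$, so we may assume $0 \in \Delta$; then $NP(g_p - c) = \Delta$ for every $c \in \CC$, which is essential for the inductive step. The base case $p=1$ is the classical formula for a generic hypersurface: compactify $(\CC^*)^n$ by the projective toric variety $X_\Sigma$ of a smooth refinement $\Sigma$ of the normal fan of $\Delta$, in which the closure $\overline{Z^*}$ is a smooth ample divisor, then extract $\chi(Z^*)$ from $\chi(\overline{Z^*})$ (computed by Hirzebruch-Riemann-Roch or adjunction on $X_\Sigma$) by recursively subtracting the boundary contributions $\chi(\overline{Z^*} \cap O_\Gamma)$, which by induction on $\dim \Delta$ are of the same form on smaller faces of $\Delta$.

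\textbf{Inductive step.} Assume the formula for codimension $p-1$, and set $W := \{g_1 = \cdots = g_{p-1} = 0\} \subset (\CC^*)^n$; by Bertini's theorem and genericity, $W$ is smooth of pure dimension $d = n-p+1$, and the inductive hypothesis gives
\begin{equation*}
\chi(W) = (-1)^{d}\binom{n-1}{p-2}\Vol_\ZZ(\Delta).
\end{equation*}
Consider $f := g_p|_W \colon W \to \CC$ and let $S \subset \CC$ denote its finite critical-value set; generically $0 \notin S$. Over $\CC \setminus S$, $f$ is a submersion whose fibers are all generic codimension-$p$ complete intersections with common Newton polytope $\Delta$, hence each has Euler characteristic equal to the target $\chi(Z^*)$. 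Additivity of $\chi$, multiplicativity over the regular locus, and a Morse-theoretic analysis at each critical fiber (made rigorous using Theorem~\ref{thm:pre-10} applied to a toric compactification of $f$) yield
\begin{equation*}
\chi(W) = \chi(Z^*) + (-1)^{d} N,
\end{equation*}
where $N$ is the total number of critical points of $f$, each generically a Morse point contributing $(-1)^{d-1}$ to the vanishing-cycle Euler characteristic.

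\textbf{Counting $N$ and conclusion.} The critical locus of $f$ is the $0$-dimensional generic scheme cut out by $g_1 = \cdots = g_{p-1} = 0$ together with the vanishing of the maximal minors of the Jacobian of $(g_1, \ldots, g_p)$. A Bernstein-type computation on this auxiliary system gives $N = \binom{n}{p-1}\Vol_\ZZ(\Delta)$; substituting and applying Pascal's identity $\binom{n-1}{p-2} + \binom{n-1}{p-1} = \binom{n}{p-1}$ produces the desired $\chi(Z^*) = (-1)^{n-p}\binom{n-1}{p-1}\Vol_\ZZ(\Delta)$. The principal obstacle is the verification of $N = \binom{n}{p-1}\Vol_\ZZ(\Delta)$: while its match with Pascal's identity is clean, extracting it from Bernstein's theorem requires care in setting up the auxiliary system and checking the genericity conditions for the mixed volume. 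A secondary subtlety is justifying the Euler-characteristic identity in the non-proper setting; a suitable toric compactification of $W$ ensures that the nearby-cycle Euler characteristic coincides with $\chi(Z^*)$, so that Theorem~\ref{thm:pre-10} applies without correction from the boundary.
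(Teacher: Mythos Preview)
The paper does not prove Theorem~\ref{thm:2-10}: it is quoted as a known special case of Bernstein--Khovanskii--Kushnirenko's theorem with a citation to \cite{Khovanskii}, and is used only as an input in the proof of Theorem~\ref{thm:2-4}. So there is no ``paper's own proof'' to compare your proposal against.

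Evaluating your sketch on its own merits: the inductive architecture is reasonable, and your Pascal--identity bookkeeping is consistent with the target formula. The genuine gap is exactly the one you flag yourself---the claim $N=\binom{n}{p-1}\Vol_{\ZZ}(\Delta)$ for the number of critical points of $g_p|_W$. The critical locus is cut out by $g_1=\cdots=g_{p-1}=0$ together with the vanishing of the $p\times p$ minors of the Jacobian of $(g_1,\ldots,g_p)$; these minors do \emph{not} have Newton polytope $\Delta$, nor are they generic for their Newton polytopes once the $g_i$ are fixed, so a direct appeal to Bernstein's theorem does not give the count. Khovanskii's original argument avoids this by a different route (iterated Lefschetz-type pencils together with a careful analysis on a toric compactification), and in fact the critical-point count you need is essentially equivalent in difficulty to the theorem itself. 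A secondary point: the identity $\chi(W)=\chi(Z^*)+(-1)^d N$ requires that \emph{every} fiber of $f=g_p|_W$ over $\CC\setminus S$ have Euler characteristic $\chi(Z^*)$; even after arranging $0\in\Delta$, one must rule out non-critical values $c$ for which $(g_1,\ldots,g_{p-1},g_p-c)$ is non-generic at infinity, which your toric compactification would have to address explicitly.
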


\section{Degree formulas for $A$-discriminant varieties}\label{sec:2}
In this section, we first introduce the formula for the degrees of $A$-discriminants obtained by Gelfand-Kapranov-Zelevinsky \cite{G-K-Z} and prove our generalization.

Let $M \simeq \ZZ^n$ be a $\ZZ$-lattice (free $\ZZ$-module) of rank $n$ and $M_{\RR}:=\RR \otimes_{\ZZ}M$ the real vector space associated with $M$. Let $A \subset M$ be a finite subset of $M$ and denote by $P$ its convex hull in $M_{\RR}$. In this paper, such a polytope $P$ will be called an integral polytope in $M_{\RR}$. If $A=\{\alpha(1),\alpha(2),\ldots, \alpha(m+1)\}$, we can define a morphism $\Phi_A \colon T\longrightarrow \PP^m$ ($m:=\sharp A-1$) from an algebraic torus $T:=\Spec (\CC [M])=(\CC^*)^n$ to a complex projective space $\PP^m$ by 
\begin{equation}
x=(x_1,x_2,\ldots,x_n) \longmapsto [x^{\alpha(1)} \colon x^{\alpha(2)} \colon \cdots \colon x^{\alpha(m+1)}],
\end{equation}
where for each $\alpha(i) \in A \subset M\simeq \ZZ^n$ and $x \in T$ we set $x^{\alpha(i)}=x_1^{\alpha(i)_1}x_2^{\alpha(i)_2}\cdots x_n^{\alpha(i)_n}$ as usual.

\begin{definition}[\cite{G-K-Z}]
Let $X_A:=\overline{{\rm im}\ \Phi_A}$ be the closure of the image of $\Phi_A \colon T \longrightarrow \PP^m$. Then the dual variety $X_A^* \subset (\PP^m)^*$ of $X_A$ is called the $A$-discriminant variety. If moreover $X_A^*$ is a hypersurface in the dual projective space $(\PP^m)^*$, then the defining homogeneous polynomial of $X_A^*$ (which is defined up to non-zero constant multiples) is called the $A$-discriminant.
\end{definition}

Note that the $A$-discriminant variety $X_A^*$ is naturally identified with the set of Laurent polynomials $f \colon T=(\CC^*)^n \longrightarrow \CC$ of the form $f(x)=\sum_{\alpha \in A} a_{\alpha} x^{\alpha}$ ($a_{\alpha} \in \CC$) such that $\{x\in T\ |\ f(x)=0 \}$ is a singular hypersurface in $T$. In order to introduce the degree formula for $A$-discriminants proved by Gelfand-Kapranov-Zelevinsky \cite{G-K-Z}, we need the following.

\begin{definition}[\cite{G-K-Z}]
For a subset $B \subset M \simeq \ZZ^n$, we define an affine $\ZZ$-sublattice $M(B)$ of $M$ by
\begin{equation}
M(B):=\left\{ \sum_{v \in B} c_v \cdot v \ \left| \ c_v \in \ZZ, \ \sum_{v \in B} c_v=1 \right.\right\}.
\end{equation}
\end{definition}

Let $\Delta \prec P$ be a face of $P$ and denote by $\LL(\Delta)$ the smallest affine subspace of $M_{\RR}$ containing $\Delta$. Then $M(A \cap \Delta )$ is a $\ZZ$-lattice of rank $\d \Delta=\d \LL(\Delta)$ in $\LL(\Delta)$ and we have $(M(A \cap \Delta))_{\RR}\simeq \LL(\Delta)$. Let $\vol$ be the Lebesgue measure of $(\LL(\Delta), M(A \cap \Delta))$ by which the volume of the fundamental domain of the translation by $M(A \cap \Delta)$ on $\LL(\Delta)$ is measured to be $1$. For a subset $K \subset \LL(\Delta)$, we set
\begin{equation}\label{eq:3-3}
\Vol_{\ZZ}(K):=(\d \Delta)! \cdot \vol(K).
\end{equation}
We call it the normalized $(\d \Delta)$-dimensional volume of $K$ with respect to the lattice $M(A\cap \Delta)$. Throughout this paper, we use this normalized volume $\Vol_{\ZZ}$ instead of the usual one.

The following formula is obtained by Gelfand-Kapranov-Zelevinsky \cite[Chapter 9, Theorem 2.8]{G-K-Z}.

\begin{theorem}[\cite{G-K-Z}]\label{thm:2-3}
Assume that $X_A \subset \PP^m$ is smooth and $X_A^*$ is a hypersurface in $(\PP^m)^*$. Then the degree of the $A$-discriminant is given by the formula:
\begin{equation}
\deg X_A^* =\sum_{\Delta \prec P} (-1)^{\cd \Delta}(\d \Delta +1) \Vol_{\ZZ}(\Delta).
\end{equation}
\end{theorem}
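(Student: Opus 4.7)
The plan is to obtain Theorem \ref{thm:2-3} as the smooth specialization of the more general Theorem \ref{thm:1-1}, whose proof combines Ernstr\"om's degree formula for dual varieties \cite{Ernstrom} with the Bernstein--Khovanskii--Kushnirenko theorem (Theorem \ref{thm:2-10}). Under the smoothness hypothesis on $X_{A}$, the Euler-obstruction weights appearing in Ernstr\"om's formula trivialize, and the proof reduces to an orbit-wise computation via Kushnirenko.

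The first step is to invoke Ernstr\"om's formula. It expresses $\deg X^{*}$, for a projective variety $X \subset \PP^{m}$ whose dual $X^{*}$ is a hypersurface, as a universal alternating sum of topological integrals of the Euler obstruction $\Eu_{X}$ over generic linear sections of $X$ of successive codimensions $i = 0, 1, \ldots, \d X$. Because $X_{A}$ is smooth we have $\Eu_{X_{A}} \equiv 1$, so these integrals reduce to the Euler characteristics $\chi(X_{A} \cap L_{i})$, where $L_{i} \subset \PP^{m}$ denotes a generic linear subspace of codimension $i$.

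The second step is to compute each Euler characteristic $\chi(X_{A} \cap L_{i})$ orbit by orbit via the stratification $X_{A} = \bigsqcup_{\Delta \prec P} T_{\Delta}$ supplied by \cite[Chapter 5, Proposition 1.9]{G-K-Z}. Each orbit $T_{\Delta} \simeq (\CC^{*})^{\d \Delta}$ is embedded in $\PP^{m}$ by the monomials indexed by $A \cap \Delta$, so intersecting $T_{\Delta}$ with a generic codimension-$i$ linear subspace of $\PP^{m}$ amounts to imposing the vanishing of $i$ generic Laurent polynomials on $(\CC^{*})^{\d \Delta}$ with Newton polytope equal to $\Delta$. Theorem \ref{thm:2-10} then yields
\[
\chi(T_{\Delta} \cap L_{i}) \;=\; (-1)^{\d \Delta - i}\binom{\d \Delta - 1}{i - 1}\Vol_{\ZZ}(\Delta) \qquad (1 \le i \le \d \Delta),
\]
with $\chi(T_{\Delta} \cap L_{i}) = 0$ for $i > \d \Delta$ and only the zero-dimensional faces contributing at $i = 0$.

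The third step is to substitute these orbit-wise Euler characteristics back into Ernstr\"om's formula, interchange the sums over $i$ and $\Delta$, and collapse the inner sum over $i$ for each fixed $\Delta$ by a short binomial identity. The inner sum evaluates to $(-1)^{\cd \Delta}(\d \Delta + 1)$, the sign $(-1)^{\cd \Delta}$ being produced by combining the Kushnirenko sign $(-1)^{\d \Delta - i}$ with the overall sign convention in Ernstr\"om's formula together with $\cd \Delta = n - \d \Delta$. Assembling all contributions gives the desired expression. The main obstacle lies in keeping careful track of signs when matching Ernstr\"om's formula, the Kushnirenko sign, and the codimension parity; once these conventions are aligned the binomial collapse is elementary. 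A secondary point requiring verification is that the restriction of generic linear forms on $\PP^{m}$ to each orbit $T_{\Delta}$ produces Laurent polynomials in sufficiently general position for Theorem \ref{thm:2-10} to apply, which follows from the explicit monomial description of the embedding $T_{\Delta} \hookrightarrow \PP^{m}$ in \cite[Chapter 5]{G-K-Z}.
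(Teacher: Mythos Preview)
Your proposal is correct and follows essentially the same route as the paper. The paper does not give a standalone proof of Theorem~\ref{thm:2-3} (it is quoted from \cite{G-K-Z}), but it proves the generalization Theorem~\ref{thm:2-4} via Ernstr\"om's formula plus orbit-wise application of Theorem~\ref{thm:2-10}, and then observes that in the smooth case $\Eu(\Delta)\equiv 1$ so that $\delta_1$ collapses to $\sum_{\Delta\prec P}(-1)^{\cd\Delta}(\d\Delta+1)\Vol_{\ZZ}(\Delta)$; this is exactly your specialization argument.

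One small discrepancy worth noting: you describe Ernstr\"om's formula as an alternating sum over linear sections of \emph{all} codimensions $i=0,1,\ldots,\d X$, whereas the version invoked in the paper (see \eqref{eq:2-4-1}) involves only three integrals, over sections of codimensions $0$, $1$, and $i+1$. For the hypersurface case $i=1$ this gives directly
\[
\delta_1=(-1)^n\Bigl\{\chi(X_A)-2\chi(X_A\cap H)+\chi(X_A\cap H_2)\Bigr\},
\]
and plugging in \eqref{eq:2-4-2}--\eqref{eq:2-4-4} with $\Eu(\Delta)=1$ yields $(\d\Delta-1)+2=\d\Delta+1$ for each face without any further binomial collapse. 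Your version with a longer sum over $i$ and a subsequent binomial identity would also work, but is slightly more roundabout than what the paper actually does.
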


In order to state our generalization of Theorem \ref{thm:2-3} to the case where $X_A^*$ may be higher-codimensional, recall that $T=\Spec (\CC [M])$ acts naturally on $X_A$ and we have a basic correspondence ($0 \leq k \leq n=\d P$):
\begin{equation}
\{ \text{$k$-dimensional faces of $P$}\} \overset{\text{1:1}}{\longleftrightarrow} \{ \text{$k$-dimensional $T$-orbits in $X_A$} \}
\end{equation}
proved by \cite[Chapter 5, Proposition 1.9]{G-K-Z}. For a face $\Delta \prec P$ of $P$, we denote by $T_{\Delta}$ the corresponding $T$-orbit in $X_{A}$. We denote the value of the Euler obstruction $\Eu_{X_A} \colon X_A \longrightarrow \ZZ$ of $X_A$ on $T_{\Delta}$ by $\Eu(\Delta) \in \ZZ$. The precise definition of the Euler obstruction will be given later in Section \ref{sec:4}. Here we simply recall that the Euler obstruction of $X_A$ is constant along each $T$-orbit $T_{\Delta}$ and takes the value $1$ on the smooth part of $X_A$. In particular, for $\Delta=P$ the $T$-orbit $T_{\Delta}$ is open dense in $X_A$ and $\Eu(\Delta)=1$.

\begin{theorem}\label{thm:2-4}
For $1 \leq i \leq m$, set
 \begin{equation}\label{eq:2-9}
\delta_i :=\sum_{\Delta \prec P}(-1)^{\cd \Delta} \left\{\binom{\d \Delta-1}{i} +(-1)^{i-1}(i+1)\right\}\Vol_{\ZZ}(\Delta) \cdot \Eu(\Delta).
\end{equation}
Then the codimension $r=\cd X_A^*=m-\d X_A^*$ and the degree of the dual variety $X_A^*$ are given by
\begin{eqnarray}
r= \cd X_A^* &=& \min \{ i \ |\ \delta_i \neq 0\},\\
\deg X_A^* &=& \delta_r.
\end{eqnarray}
\end{theorem}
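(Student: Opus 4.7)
The strategy is to specialize Ernstr\"om's degree formula for dual varieties \cite{Ernstrom}, together with its higher-codimensional refinement from \cite{M-T}, to the projective toric variety $X_A \subset \PP^m$ equipped with its natural decomposition $X_A = \bigsqcup_{\Delta \prec P} T_\Delta$ into $T$-orbits. For any projective variety $X \subset \PP^m$ endowed with a Whitney stratification by smooth locally closed strata $\{V_\alpha\}$ along which the Euler obstruction $\Eu_X$ is constant with value $\Eu_X(V_\alpha)$, the Ernstr\"om--Matsui--Takeuchi machinery produces integers
\begin{equation*}
\delta_i(X) = \sum_\alpha (-1)^{\cd V_\alpha}\left\{\binom{\d V_\alpha - 1}{i} + (-1)^{i-1}(i+1)\right\} \deg(\overline{V_\alpha})\cdot \Eu_X(V_\alpha)
\end{equation*}
such that $r=\cd X^*=\min\{i : \delta_i(X) \neq 0\}$ and $\deg X^* = \delta_r(X)$. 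Specializing this to $X = X_A$ with the $T$-orbit stratification, the face-orbit correspondence identifies the strata with the faces $\Delta \prec P$, gives $\cd V_\alpha = \cd \Delta$ and $\d V_\alpha = \d \Delta$, and yields $\Eu_{X_A}(T_\Delta) = \Eu(\Delta)$ by definition.

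The only remaining input is the combinatorial identity
\begin{equation*}
\deg(\overline{T_\Delta}) = \Vol_{\ZZ}(\Delta)
\end{equation*}
for every face $\Delta \prec P$, where the volume is normalized with respect to the affine sublattice $M(A \cap \Delta)$. I would prove it by intersecting $\overline{T_\Delta}$, a $\d\Delta$-dimensional projective subvariety of $\PP^m$, with a generic linear subspace $L$ of complementary dimension $m-\d\Delta$. Genericity of $L$ forces every intersection point to lie in the open orbit $T_\Delta \simeq (\CC^*)^{\d\Delta}$ (the boundary $\overline{T_\Delta}\setminus T_\Delta$ has dimension strictly smaller than $\d\Delta$, hence is avoided for generic $L$), and there the $\d\Delta$ linear equations cutting out $L$ pull back to generic Laurent polynomials on $(\CC^*)^{\d\Delta}$ whose Newton polytopes (relative to $M(A \cap \Delta)$) all coincide with $\Delta$. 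Applying Theorem \ref{thm:2-10} with $n = p = \d\Delta$ then produces exactly $\Vol_{\ZZ}(\Delta)$ intersection points, giving the claimed degree of $\overline{T_\Delta}$.

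Substituting $\deg(\overline{T_\Delta}) = \Vol_{\ZZ}(\Delta)$ and $\Eu_{X_A}|_{T_\Delta} \equiv \Eu(\Delta)$ into the expression above reproduces formula \eqref{eq:2-9} on the nose, and the stated characterization of $r$ and $\deg X_A^*$ follows immediately. The principal obstacle I anticipate is not the combinatorial substitution but the invocation of the higher-codimensional refinement of Ernstr\"om's theorem: as originally stated in \cite{Ernstrom} the formula computes $\deg X^*$ only under the hypothesis that $X^*$ is a hypersurface, whereas what is actually required here is the simultaneous vanishing of $\delta_i(X)$ for $i < \cd X^*$ together with the identification $\delta_r(X) = \deg X^*$. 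The role of \cite{M-T} is to supply this refinement, typically via a generic linear projection argument that reduces a higher-codimensional dual to the dual of a suitable linear section of $X_A$, to which Ernstr\"om's hypersurface formula then applies.
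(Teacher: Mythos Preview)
There is a genuine gap. The formula you attribute to the ``Ernstr\"om--Matsui--Takeuchi machinery'' for an arbitrary projective variety $X \subset \PP^m$ with Whitney stratification $\{V_\alpha\}$,
\[
\delta_i(X) = \sum_\alpha (-1)^{\cd V_\alpha}\left\{\binom{\d V_\alpha - 1}{i} + (-1)^{i-1}(i+1)\right\} \deg(\overline{V_\alpha})\cdot \Eu_X(V_\alpha),
\]
is false in general. Take $X \subset \PP^m$ a smooth rational curve of degree $d \geq 2$ with the trivial one-stratum decomposition: your expression gives $\delta_1 = 2d$, whereas the correct value is $\deg X^* = 2d-2$. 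What \cite{Ernstrom} and \cite{M-T} actually supply (this is \eqref{eq:2-4-1} in the paper) is
\[
\delta_i = (-1)^{n+i-1}\left\{ i \int_{\PP^m}\Eu_{X_A} - (i+1) \int_H \Eu_{X_A} + \int_{H_{i+1}}\Eu_{X_A} \right\},
\]
an expression in \emph{topological Euler integrals} of $\Eu_{X_A}$ over generic linear sections, not in degrees of orbit closures.

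The substance of the proof is precisely the evaluation of these integrals, and this is where the toric structure enters. Writing $\int_{H_{i+1}}\Eu_{X_A} = \sum_{\Delta \prec P} \Eu(\Delta)\cdot\chi(T_\Delta \cap H_{i+1})$, one observes that $T_\Delta \cap H_{i+1}$ is cut out of $(\CC^*)^{\d\Delta}$ by $i+1$ generic Laurent polynomials with Newton polytope $\Delta$, and Theorem~\ref{thm:2-10} yields
\[
\chi(T_\Delta \cap H_{i+1}) = (-1)^{\d\Delta-i-1}\binom{\d\Delta-1}{i}\Vol_{\ZZ}(\Delta).
\]
Thus the binomial coefficient and the normalized volume in \eqref{eq:2-9} arise \emph{together} from a single Bernstein--Khovanskii--Kushnirenko computation of the Euler characteristic of a positive-dimensional complete intersection in a torus; they cannot be decoupled into a general dual-degree formula involving $\deg(\overline{V_\alpha})$ followed by the identity $\deg(\overline{T_\Delta}) = \Vol_{\ZZ}(\Delta)$. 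Your invocation of Theorem~\ref{thm:2-10} only in the zero-dimensional case $n=p=\d\Delta$ correctly recovers $\deg(\overline{T_\Delta})$, but it bypasses exactly the step that produces the shape of \eqref{eq:2-9}.
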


\begin{remark}\label{rem:2-5}
\begin{enumerate}
\item For $p \in \ZZ$ and $q\in \ZZ_{\geq 0}$, we used the generalized binomial coefficient
\begin{equation}
\binom{p}{q}=\dfrac{p(p-1)(p-2)\cdots (p-q+1)}{q!}.
\end{equation}
For example, for a vertex $\Delta=\{v\}\prec P$, we have $\binom{\d \Delta-1}{i}=\binom{-1}{i}=(-1)^i$.
\item Note that the number $\cd X_A^*-1$ is called the dual defect of $X_A$.
\end{enumerate}
\end{remark}

\begin{proof}
First, by \cite[Chapter 5, Proposition 1.2]{G-K-Z} we may assume that $M(A)=M$. Recall that for $\alpha(j) \in A$ ($1 \leq j \leq \sharp A= m+1$) the function
\begin{equation}
T=(\CC^*)^n \ni x=(x_1,x_2,\ldots, x_n) \longmapsto x^{\alpha(j)} \in \CC^*
\end{equation}
is defined by the canonical pairing
\begin{equation}
T\times M=\Hom_{\ZZ}(M, \CC^*) \times M \longrightarrow \CC^*,
\end{equation}
where we consider $\CC^*$ as an abelian group (i.e. a $\ZZ$-module) and $\Hom_{\ZZ}(M, \CC^*)$ denotes the group of homomorphisms of $\ZZ$-modules from $M$ to $\CC^*$. Let us consider an affine chart
\begin{equation}
U_{m+1}:=\{ [\xi_1:\xi_2:\cdots :\xi_{m+1}] \in \PP^m \ |\ \xi_{m+1} \neq 0\}\simeq \CC^m
\end{equation}
of $\PP^m$. Then for any $x \in T=\Hom_{\ZZ}(M,\CC^*)$ we have $x^{\alpha(m+1)} \neq 0$ and there exists a morphism
\begin{equation}
\begin{array}{ccrcc}
\Psi_A &\colon & T & \longrightarrow &U_{m+1} \simeq \CC^m\\
& & \inun & &\inun \\
& & x &\longmapsto & (x^{\alpha(1)-\alpha(m+1)},\ldots, x^{\alpha(m)-\alpha(m+1)})
\end{array}
\end{equation}
induced by $\Phi_A \colon T \longrightarrow \PP^m$. Therefore, for $x,y\in T$ we have
\begin{eqnarray}
\Phi_A(x)=\Phi_A(y)
&\Longleftrightarrow& \Psi_A(x)=\Psi_A(y)\\
&\Longleftrightarrow& x^{\alpha}=y^{\alpha} \hspace{3mm}\text{for any $\alpha \in \dsum_{j=1}^m \ZZ(\alpha(j)-\alpha(m+1))$.}
\end{eqnarray}
If we take $\alpha(m+1)\in A$ to be the origin of the lattices $M$ and $M(A)$, then we obtain an isomorphism $M \simeq M(A)= \dsum_{j=1}^m \ZZ(\alpha(j)-\alpha(m+1))$ of lattices ($\ZZ$-modules). Note that the morphism $\Phi_A \colon T \longrightarrow \PP^m$ is not changed by this change of the origin of $M=M(A)$. Therefore we see that the morphism $\Phi_A \colon T \simeq (\CC^*)^n \longrightarrow \PP^m$ induces an isomorphism $T\simeq \Phi_A(T)\simeq (\CC^*)^n$. Note that $\Phi_A(T) \simeq (\CC^*)^n$ is the largest $T$-orbit $T_P$ in $X_A=\overline{{\rm im}\Phi_A} \subset \PP^m$. We can construct such an isomorphism also for any $T$-orbit $T_{\Delta}$ ($\Delta \prec P$) in $X_A$ as follows. For a face $\Delta \prec P$ of $P$, taking a point $\alpha(j) \in M(A \cap \Delta) \subset M \cap \LL(\Delta)$ to be the origin of the lattices $M(A \cap \Delta)$ and $M \cap \LL(\Delta)$, we consider $M(A \cap \Delta)$ as a sublattice ($\ZZ$-submodule) of $M \cap \LL(\Delta)$. By this choice of the origin $0 =\alpha(j)$ of the lattice $M(A \cap \Delta) \simeq \ZZ^{\d \Delta}$, we can construct a morphism $\Phi_{A \cap \Delta} \colon \Hom_{\ZZ}(M(A \cap \Delta) ,\CC^*) \simeq (\CC^*)^{\d \Delta} \longrightarrow \PP^m$ as follows. First, for $x \in \Hom_{\ZZ}(M(A \cap \Delta),\CC^*) \simeq (\CC^*)^{\d \Delta}$ and $\alpha \in M(A \cap \Delta)$ denote by $x^{\alpha}\in \CC^*$ the image of the pair $(x,\alpha)$ by the canonical paring
\begin{equation}
\Hom_{\ZZ}(M(A \cap \Delta),\CC^*)\times (M(A \cap\Delta)) \longrightarrow \CC^*.
\end{equation}
Then the morphism $\Phi_{A \cap \Delta} \colon\Hom_{\ZZ}(M(A \cap \Delta), \CC^*) \longrightarrow \PP^m$ is defined by
\begin{equation}
\Phi_{A \cap \Delta}(x)=[\xi_1:\xi_2:\cdots :\xi_{m+1}]
\end{equation}
for $x\in \Hom_{\ZZ}(M(A \cap \Delta),\CC^*) \simeq (\CC^*)^{\d \Delta}$, where we set
\begin{equation}
\xi_k :=\begin{cases}x^{\alpha(k)} & \text{if $\alpha(k) \in A \cap \LL(\Delta)$}, \\
0 & \text{otherwise}.
\end{cases}
\end{equation}
In this situation, by \cite[Proposition 1.2 and Proposition 1.9 in Chapter 5]{G-K-Z} the $T$-orbit $T_{\Delta}$ coincides with the image of $\Phi_{A \cap \Delta}$ and we can similarly prove that the morphism
\begin{equation}
\Phi_{A \cap \Delta} \colon \Hom_{\ZZ}(M(A \cap \Delta),\CC^*) \longtwoheadrightarrow \Phi_{A \cap \Delta}((\CC^*)^{\d \Delta})=T_{\Delta}
\end{equation}
is an isomorphism. By making use of this very simple description of \\$\Phi_{A\cap \Delta} \colon (\CC^*)^{\d \Delta} \simto T_{\Delta}$ for faces $\Delta \prec P$, we can now give a proof of our theorem. For $1 \leq i \leq m$, we take a generic linear subspace $H \simeq \PP^{m-1}$ (resp. $H_{i+1} \simeq \PP^{m-i-1}$) of $\PP^m$ of codimension $1$ (resp. $i+1$) and set
\begin{equation}\label{eq:2-4-1}
\delta_i:=(-1)^{n+i-1}\left\{ i 
\int_{\PP^m}\Eu_{X_A} -(i+1) \int_H 
\Eu_{X_A} +\int_{H_{i+1}}\Eu_{X_A} \right\}.
\end{equation}
Here we set $H_{m+1}:=\emptyset$. Then by \cite[Theorem 1.1]{Ernstrom} and \cite[Remark 3.3]{M-T} (see also \cite{M-T-2} and \cite{M-T-1}) the codimension $r=\cd X_A^*=m-\d X_A^*$ and the degree of the dual variety $X_A^*\subset (\PP^m)^*$ of $X_A$ are given by
\begin{gather}
r=\cd X_A^*=\min\{1\leq i \leq m\ |\ \delta_i \neq 0\},\\
\deg X_A^* =\delta_r.
\end{gather}
Hence it remains for us to rewrite the above integers $\delta_i$ ($1 \leq i \leq m$). First of all, since the Euler obstruction $\Eu_{X_A} \colon X_A \longrightarrow \ZZ$ is constant on each $T$-orbit $T_{\Delta} \simeq (\CC^*)^{\d \Delta}$ for $\Delta \prec P$ and $\chi((\CC^*)^d)=0$ for $d\geq 1$, we have
\begin{equation}\label{eq:2-4-2}
\int_{\PP^m}\Eu_{X_A} =\dsum_{\begin{subarray}{c} \Delta \prec P\\ \d \Delta=0\end{subarray}} \Eu(\Delta).
\end{equation}
Next, by taking a generic hyperplane
\begin{equation}
H=\left\{ [\xi_1:\xi_2:\cdots :\xi_{m+1}]\in \PP^m \ \left|\ \dsum_{j=1}^{m+1} a_j\xi_j=0 \right.\right\}
\end{equation}
($a_j \in \CC$) of $\PP^m$, we can calculate the topological integral $\int_H \Eu_{X_A}$ as follows. Since $\Phi_{A \cap \Delta} \colon \Hom_{\ZZ}(M(A \cap \Delta),\CC^*)\simeq (\CC^*)^{\d \Delta} \longrightarrow T_{\Delta}$ is an isomorphism, for the Laurent polynomial
\begin{equation}
\begin{array}{ccccc}
L_{\Delta} & \colon & \Hom_{\ZZ}(M(A \cap \Delta), \CC^*) & \longrightarrow & \CC^* \\
& & \inun & & \inun \\
& & x & \longmapsto & \dsum_{\alpha(j) \in A \cap \Delta} a_j x^{\alpha(j)}
\end{array}
\end{equation}
on the torus $\Hom_{\ZZ}(M(A \cap \Delta), \CC^*) \simeq (\CC^*)^{\d \Delta}$ we have
\begin{equation}
\chi(T_{\Delta} \cap H)=\chi(\{x\in (\CC^*)^{\d \Delta} \ |\ L_{\Delta}(x)=0\}).
\end{equation}
Note that for a generic hyperplane $H \subset \PP^m$ the hypersurface $\{x\in (\CC^*)^{\d \Delta} \ |\ L_{\Delta}(x)=0\}$ in the torus $\Hom_{\ZZ}(M(A \cap \Delta),\CC^*) \simeq (\CC^*)^{\d \Delta}$ cut out by $H$ satisfies the assumption of Bernstein-Khovanskii-Kushnirenko's theorem (Theorem \ref{thm:2-10}) for any $\Delta \prec P$. By Theorem \ref{thm:2-10}, we thus obtain
\begin{equation}\label{eq:2-4-3}
\int_H \Eu_{X_A}=\dsum_{\begin{subarray}{c}\Delta \prec P\\ \d \Delta \geq 1\end{subarray}}(-1)^{\d \Delta -1} \Vol_{\ZZ}(\Delta) \cdot \Eu(\Delta).
\end{equation}

Similarly, by taking a generic linear subspace
\begin{equation}
H_{i+1}=\left\{[\xi_1:\xi_2:\cdots :\xi_{m+1}] \in \PP^m \ \left|\ \dsum_{j=1}^{m+1}a_{j}^{(k)}\xi_j=0 \ \ (k=1,2,\ldots, i+1) \right. \right\}
\end{equation}
($a_{j}^{(k)}\in \CC$) of $\PP^m$ of codimension $i+1$ and using Theorem \ref{thm:2-10}, we have
\begin{equation}\label{eq:2-4-4}
\int_{H_{i+1}} \Eu_{X_A} =\dsum_{\begin{subarray}{c} \Delta \prec P\\ \d \Delta \geq i+1 \end{subarray}} (-1)^{\d \Delta -i-1} \binom{\d\Delta -1}{i} \Vol_{\ZZ}(\Delta)\cdot \Eu(\Delta).
\end{equation}
By \eqref{eq:2-4-1}, \eqref{eq:2-4-2}, \eqref{eq:2-4-3} and \eqref{eq:2-4-4}, we finally obtain
\begin{equation}
\delta_i :=\sum_{\Delta \prec P}(-1)^{\cd \Delta} \left\{\binom{\d \Delta-1}{i} +(-1)^{i-1}(i+1)\right\}
\Vol_{\ZZ}(\Delta) \cdot \Eu(\Delta).
\end{equation}
This completes the proof.
\end{proof}

\begin{corollary}
Assume that $X_A^*$ is a hypersurface in $(\PP^m)^*$. Then the degree of the $A$-discriminant is given by
\begin{equation}
\deg X_A^*=\sum_{\Delta \prec P}(-1)^{\cd \Delta}(\d \Delta +1) \Vol_{\ZZ}(\Delta) \cdot \Eu(\Delta).
\end{equation}
\end{corollary}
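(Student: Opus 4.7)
The plan is to deduce the corollary as an immediate specialization of Theorem~\ref{thm:2-4}. Since by assumption $X_A^*$ is a hypersurface in $(\PP^m)^*$, we have $r = \cd X_A^* = 1$, so Theorem~\ref{thm:2-4} gives $\deg X_A^* = \delta_1$. Hence the task reduces to simplifying the expression for $\delta_1$ from \eqref{eq:2-9}.

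Substituting $i = 1$ into \eqref{eq:2-9}, the bracketed coefficient becomes
\begin{equation*}
\binom{\d \Delta - 1}{1} + (-1)^{0}(1+1) = (\d \Delta - 1) + 2 = \d \Delta + 1.
\end{equation*}
I should briefly note that this identity holds uniformly over all faces $\Delta \prec P$, including vertices: when $\d \Delta = 0$, the generalized binomial coefficient of Remark~\ref{rem:2-5}(i) gives $\binom{-1}{1} = -1$, and $-1 + 2 = 1 = \d \Delta + 1$, so the formula is consistent with the vertex contribution. Plugging this simplification back into \eqref{eq:2-9} yields exactly
\begin{equation*}
\deg X_A^* = \delta_1 = \sum_{\Delta \prec P}(-1)^{\cd \Delta}(\d \Delta + 1)\Vol_{\ZZ}(\Delta)\cdot\Eu(\Delta),
\end{equation*}
which is the desired formula.

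There is no serious obstacle here — the corollary is a direct arithmetic specialization of the main theorem. The only point worth flagging (though hardly an obstacle) is ensuring the binomial-coefficient convention is applied correctly at low-dimensional faces, which is handled by the remark already in place. In particular, no further geometric input beyond Theorem~\ref{thm:2-4} is required.
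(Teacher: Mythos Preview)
Your proof is correct and matches the paper's approach: the corollary is stated immediately after Theorem~\ref{thm:2-4} with no separate proof, since it is just the specialization $i=1$ of \eqref{eq:2-9}, exactly as you compute.
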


Note that if the dual defect of $X_A$ is zero the degree formula of $X_A^*$ for singular $X_A$'s was also obtained by Dickenstein-Feichtner-Sturmfels \cite{D-F-S}. In their paper, they express the degree of $X_A^*$ by other combinatorial invariants of $A$. However our formulas seem to be more directly related to the geometry of the convex polytope $P$. For example, if $X_A$ is smooth, our formula coincides with Gelfand-Kapranov-Zelevinsky's theorem \cite[Chapter 9, Theorem 2.8]{G-K-Z}.

In Section \ref{sec:4}, we will give two combinatorial formulas for the Euler obstruction $\Eu_{X_A} \colon X_A \longrightarrow \ZZ$ of $X_A$. Together with Theorem \ref{thm:2-4} above, we can calculate the dimension and the degree of $X_A^*$ for any $A \subset M=\ZZ^n$ (see Example \ref{exa:6-8}).

\section{Euler obstructions of toric varieties}\label{sec:4}

In this section, we give some formulas for the Euler obstructions of toric varieties. A beautiful formula for the Euler obstructions of $2$-dimensional normal toric varieties was proved by Gonzalez-Sprinberg \cite{G-S}. Our result can be considered as a natural generalization of his formula.

First we recall the definition of Euler obstructions (for the detail see \cite{Kashiwara} etc.). Let $X$ be an algebraic variety over $\CC$. Then the Euler obstruction $\Eu_X$ of $X$ is a $\ZZ$-valued constructible function on $X$ defined as follows. The value of $\Eu_X$ on the smooth part of $X$ is defined to be $1$. In order to define the value of $\Eu_X$ at a singular point $p \in X$, we take an affine open neighborhood $U$ of $p$ in $X$ and a closed embedding $U \longhookrightarrow \CC^m$. Next we choose a Whitney stratification $U=\bigsqcup_{\alpha \in A}U_{\alpha}$ of $U$ such that $U_{\alpha}$ are connected. Then the values $\Eu_X(U_{\alpha})$ of $\Eu_X$ on the strata $U_{\alpha}$ are defined by induction on codimensions of $U_{\alpha}$ as follows.
\begin{enumerate}\renewcommand{\labelenumi}{{\rm (\roman{enumi})}}
\item If $U_{\alpha}$ is contained in the smooth part of $U$, we set $\Eu_X(U_{\alpha})=1$.
\item Assume that for $k \geq 0$ the values of $\Eu_X$ on the strata $U_{\alpha}$ such that $\cd U_{\alpha} \leq k$ are already determined. Then for a stratum $U_{\beta}$ such that $\cd U_{\beta}=k+1$ the value $\Eu_X(U_{\beta})$ is defined by
\begin{equation}
\Eu_X(U_{\beta})=\sum_{U_{\beta} \subsetneq \overline{U_{\alpha}}} \chi(U_{\alpha} \cap f^{-1}(\eta) \cap B(q;\e)) \cdot \Eu_X(U_{\alpha})
\end{equation}
for sufficiently small $\e>0$ and $0 < \eta \ll \e$, where $q \in U_{\beta}$ and $f$ is a holomorphic function defined on an open neighborhood $W$ of $q$ in $\CC^m$ such that $U_{\beta} \cap W \subset f^{-1}(0)$ and $(q;{\rm grad} f(q)) \in T_{U_{\beta}}^*\CC^m \setminus \left(\bigcup_{U_{\beta} \subsetneq \overline{U_{\alpha}}} \overline{T_{U_{\alpha}}^*\CC^m}\right)$.
\end{enumerate}
The above integers $\chi(U_{\alpha} \cap f^{-1}(\eta) \cap B(q;\e))$ can be calculated by the nearby cycle functor $\psi_f$ as
\begin{equation} 
\chi(U_{\alpha} \cap f^{-1}(\eta) \cap B(q;\e)) =\chi (\psi_f(\CC_{U_{\alpha} } )_q). 
\end{equation} 
Indeed by \cite[Proposition 4.2.2]{Dimca}, for the Milnor fiber $F_q: =U \cap f^{-1}(\eta) \cap B(q;\e)=(f|_{U})^{-1}(\eta) \cap B(q;\e)$ of $f|_{U}\colon U \longrightarrow \CC$ at $q \in U_{\beta}$ we have an isomorphism 
\begin{equation} 
\RG (F_q ; \CC_{U_{\alpha} }) \simeq \psi_f(\CC_{U_{\alpha} } )_q. 
\end{equation} 
Since $F_q =\bigsqcup_{\gamma \in A} (U_{\gamma} \cap F_q)$ is also a Whitney stratification of $F_q$ ($f|_{U} \colon U \longrightarrow \CC$ has the isolated stratified critical value $0 \in \CC$ by \cite[Proposition 1.3]{Massey}), we obtain $\chi (\RG (F_q ; \CC_{U_{\alpha} }))=  \chi(U_{\alpha} \cap F_q)= \chi(U_{\alpha} \cap f^{-1}(\eta) \cap B(q;\e))$ by \cite[Theorem 4.1.22]{Dimca}.

\subsection{The case of affine toric varieties}\label{sec:4-1}

From now on, we shall consider the toric case. Let $N \simeq \ZZ^n$ be a $\ZZ$-lattice of rank $n$ and $\sigma$ a strongly convex rational polyhedral cone in $N_{\RR}=\RR \otimes_{\ZZ}N$. We denote by $M$ the dual lattice of $N$ and define the polar cone $\sigma^{\vee}$ of $\sigma$ in $M_{\RR}=\RR \otimes_{\ZZ}M$ by 
\begin{equation} 
\sigma^{\vee}=\{ v \in M_{\RR} \ | \ \langle u, v \rangle \geq 0 \quad \text{for any} \quad u \in \sigma \}.
\end{equation} 
Then the dimension of $\sigma^{\vee}$ is $n$ and we obtain a semigroup $\SS_{\sigma}:=\sigma^{\vee} \cap M$ and an $n$-dimensional affine toric variety $X:=U_{\sigma}=\Spec(\CC[\SS_{\sigma}])$ (see \cite{Fulton} and \cite{Oda} etc.). Recall also that the algebraic torus $T=\Spec (\CC [M])\simeq (\CC^*)^n$ acts naturally on $X=U_{\sigma}$ and the $T$-orbits in $X$ are indexed by the faces $\Delta_{\alpha} \prec \sigma^{\vee}$ of $ \sigma^{\vee}$. We denote by $\LL(\Delta_{\alpha})$ the smallest linear subspace of $M_{\RR}$ containing $\Delta_{\alpha}$. For a face $\Delta_{\alpha}$ of $\sigma^{\vee}$, denote by $T_{\alpha}$ the $T$-orbit $\Spec (\CC [M \cap \LL(\Delta_{\alpha})])$ which corresponds to $\Delta_{\alpha}$. Then we obtain a decomposition $X=\bigsqcup_{\Delta_{\alpha} \prec \sigma^{\vee}} T_{\alpha}$ of $X=U_{\sigma}$ into $T$-orbits. By the above recursive definition (ii) of $\Eu_X$, in order to compute the Euler obstruction $\Eu_X \colon X \longrightarrow \ZZ$ it suffices to determine the following numbers.

\begin{definition}\label{dfn:4-1}
For two faces $\Delta_{\alpha}$, $\Delta_{\beta}$ of $\sigma^{\vee}$ such that $\Delta_{\beta} \precneqq \Delta_{\alpha}$ ($\Longleftrightarrow$ $T_{\beta} \subsetneq \overline{T_{\alpha}}$), we define the linking number $l_{\alpha,\beta} \in \ZZ$ of $T_{\alpha}$ along $T_{\beta}$ as follows. For a point $q\in T_{\beta}$ and a closed embedding $\iota \colon X=U_{\sigma} \longhookrightarrow \CC^m$, we set
\begin{equation}
l_{\alpha,\beta}:=\chi (\psi_f (\CC_{T_{\alpha} })_q), 
\end{equation}
where $f$ is a holomorphic function defined on an open neighborhood $W$ of $q$ in $\CC^m$ such that $T_{\beta} \cap W \subset f^{-1}(0)$ and $(q; \grad f(q)) \in T_{T_{\beta}}^*\CC^m \setminus \left(\bigcup_{\Delta_{\beta} \precneqq \Delta_{\gamma}} \overline{T_{T_{\gamma}}^*\CC^m}\right)$.
\end{definition}

Note that the above definition of $l_{\alpha,\beta}$ does not depend on the choice of $q \in T_{\beta}$, $\iota$ and $f$ etc. We will show that $l_{\alpha,\beta}$ can be described by the geometry of the cones $\Delta_{\alpha}$ and $\Delta_{\beta}$. First let us consider the $\ZZ$-lattice $M_{\beta}:=M \cap \LL(\Delta_{\beta})$ of rank $\d \Delta_{\beta}$. Next set $\LL(\Delta_{\beta})^{\prime}:=M_{\RR}/\LL(\Delta_{\beta})$ and let $p_{\beta} \colon M_{\RR} \longrightarrow \LL(\Delta_{\beta})^{\prime}$ be the natural projection. Then $M_{\beta}^{\prime}:=p_{\beta}(M) \subset \LL(\Delta_{\beta})^{\prime}$ is a $\ZZ$-lattice of rank $(n-\d \Delta_{\beta})$ and $K_{\alpha, \beta}:=p_{\beta}(\Delta_{\alpha}) \subset \LL(\Delta_{\beta})^{\prime}$ is a proper convex cone with apex $0 \in \LL(\Delta_{\beta})^{\prime}$.

\begin{definition}\label{dfn:4-2}
For two faces $\Delta_{\alpha}$ and $\Delta_{\beta}$ of $\sigma^{\vee}$ such that $\Delta_{\beta} \precneqq \Delta_{\alpha}$, we define the normalized relative subdiagram volume $\RSV_{\ZZ}(\Delta_{\alpha}, \Delta_{\beta})$ of $\Delta_{\alpha}$ along $\Delta_{\beta}$ by
\begin{equation}
\RSV_{\ZZ}(\Delta_{\alpha}, \Delta_{\beta}):=\Vol_{\ZZ}(K_{\alpha,\beta} \setminus \Theta_{\alpha, \beta}),
\end{equation}
where $\Theta_{\alpha,\beta}$ is the convex hull of $K_{\alpha,\beta} \cap (M_{\beta}^{\prime} \setminus \{0\})$ in $\LL(\Delta_{\beta})^{\prime} \simeq \RR^{n- \d \Delta_{\beta}}$ and $\Vol_{\ZZ}(K_{\alpha,\beta} \setminus \Theta_{\alpha, \beta})$ is the normalized $(\d \Delta_{\alpha} -\d \Delta_{\beta})$-dimensional volume of $K_{\alpha,\beta} \setminus \Theta_{\alpha, \beta}$ with respect to the lattice $M_{\beta}^{\prime} \cap \LL(K_{\alpha,\beta})$. If $\Delta_{\alpha}=\Delta_{\beta}$, we set $\RSV_{\ZZ}(\Delta_{\alpha},\Delta_{\alpha}):=1$.
\end{definition}

\begin{theorem}\label{thm:4-3}
For two faces $\Delta_{\alpha}$ and $\Delta_{\beta}$ of $\sigma^{\vee}$ such that $\Delta_{\beta} \precneqq \Delta_{\alpha}$, the linking number $l_{\alpha, \beta}$ of $T_{\alpha}$ along $T_{\beta}$ is given by
\begin{equation}
l_{\alpha, \beta}=(-1)^{\d \Delta_{\alpha}-\d \Delta_{\beta}-1}\RSV_{\ZZ}(\Delta_{\alpha}, \Delta_{\beta}).
\end{equation}
\end{theorem}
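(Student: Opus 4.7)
The first step is to reduce to the case $\Delta_\beta = \{0\}$ (so that $q$ becomes the torus-fixed point of an affine toric variety), by exploiting the local analytic product structure of $X = U_\sigma$ near $q \in T_\beta$. The germ $(X, q)$ decomposes as $(T_\beta, q) \times (Y, 0)$, where $Y := \Spec(\CC[K_{\sigma,\beta} \cap M_\beta^\prime])$ is the transverse affine toric variety associated to the projected cone $K_{\sigma,\beta} := p_\beta(\sigma^\vee)$; under this splitting the orbit $T_\alpha$ corresponds to $T_\beta \times T_\alpha^Y$, with $T_\alpha^Y \subset Y$ the orbit for the face $K_{\alpha,\beta}$. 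Splitting the embedding $X \hookrightarrow \CC^m$ compatibly as $\CC^{m_1} \times \CC^{m_2}$, genericity of $f$ allows us to take $f$ to be the pullback of a generic function $g \colon Y \to \CC$ with $g(0) = 0$. A K\"unneth-type argument for nearby cycles (valid since $f$ depends only on the $Y$-factor) then gives $\chi(\psi_f(\CC_{T_\alpha})_q) = \chi(\psi_g(\CC_{T_\alpha^Y})_0)$, reducing the problem to the apex case.

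\textbf{Core computation.} In the apex case, I would take $g(y) := \sum_{v \in V} c_v y^v$ with generic coefficients $c_v \in \CC$, where $V$ is the set of non-zero lattice points of $M_\beta^\prime$ contained in the closure of $K_{\sigma,\beta} \setminus \Theta_{\sigma,\beta}$. The crucial property of this choice is that for every face $K' := K_{\alpha,\beta}$ of $K_{\sigma,\beta}$, the restriction $g|_{T_\alpha^Y}$ is a generic Laurent polynomial on $T_\alpha^Y \simeq (\CC^*)^{\d K'}$ whose support satisfies $\mathrm{conv}((V \cap K') \cup \{0\}) = \overline{K' \setminus \Theta_{\alpha,\beta}}$. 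By the Milnor fibre interpretation of nearby cycles, the Euler characteristic $\chi(\psi_g(\CC_{T_\alpha^Y})_0)$ equals that of the local Milnor fibre $F_0 \cap T_\alpha^Y$, where $F_0 = g^{-1}(\eta) \cap B(\e)$. Granted that this local Milnor Euler characteristic coincides with that of the global level set $\{g|_{T_\alpha^Y} = \eta\} \cap T_\alpha^Y$, Theorem \ref{thm:2-10} applied to the Laurent polynomial $g|_{T_\alpha^Y} - \eta$ (whose Newton polygon is $\overline{K' \setminus \Theta_{\alpha,\beta}}$) yields
\begin{equation*}
\chi(F_0 \cap T_\alpha^Y) = (-1)^{\d K' - 1} \Vol_\ZZ\bigl(\overline{K' \setminus \Theta_{\alpha,\beta}}\bigr) = (-1)^{\d \Delta_\alpha - \d \Delta_\beta - 1}\RSV_\ZZ(\Delta_\alpha, \Delta_\beta),
\end{equation*}
which is exactly the desired formula.

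\textbf{Main obstacle.} The crux is the identification $\chi(F_0 \cap T_\alpha^Y) = \chi(\{g|_{T_\alpha^Y} = \eta\} \cap T_\alpha^Y)$ for our specific $V$ and generic $c_v$. This is a Newton non-degeneracy statement in the spirit of Kouchnirenko and Varchenko: the non-compact ``ends'' of the global level set in $T_\alpha^Y$ must contribute nothing to the Milnor fibre because the Newton polytope $\overline{K' \setminus \Theta_{\alpha,\beta}}$ is convenient with respect to the cone $K'$. A clean way to establish it is via a toric log resolution $\pi \colon \widetilde{Y} \to Y$ of the pair $(Y, g^{-1}(0))$: on the smooth toric variety $\widetilde{Y}$ the sheaf $\psi_{g \circ \pi}(\CC_{\pi^{-1}(T_\alpha^Y)})$ splits into contributions indexed by the exceptional divisors over $0 \in Y$, each computable combinatorially from the subdivision of $K_{\sigma,\beta}^\vee$; Theorem \ref{thm:pre-10} applied to $\pi$ then transports the answer back to $Y$, and the resulting signed combinatorial sum reproduces $\Vol_\ZZ(\overline{K' \setminus \Theta_{\alpha,\beta}})$ with the correct sign.
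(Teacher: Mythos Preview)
Your reduction to the apex case via the local product decomposition $X \simeq X_{\sigma,\beta} \times T_\beta$ near $T_\beta$ is exactly the first move in the paper's proof. The paper then does one extra bookkeeping step you skip: it applies Theorem~\ref{thm:pre-10} to the closed embedding $X_{\alpha,\beta} = \Spec(\CC[\SS_{\alpha,\beta}]) \hookrightarrow X_{\sigma,\beta}$ so as to compute $\chi(\psi_g(\CC_{T_{\alpha,\beta}})_0)$ intrinsically on the smaller toric variety $X_{\alpha,\beta}$ (with $g$ the restriction of a generic linear form), rather than working with $T_\alpha^Y$ inside the bigger slice $Y = X_{\sigma,\beta}$ as you do. This is a cosmetic difference.

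Where the two diverge is the final step. The paper does not carry out the Milnor-fibre computation at all; it simply invokes \cite[Corollary 3.6]{M-T-new2} (equivalently, the argument behind \cite[Chapter 10, Theorem 2.12]{G-K-Z}) as a black box to obtain $\chi(\psi_g(\CC_{T_{\alpha,\beta}})_0) = (-1)^{\dim\Delta_\alpha - \dim\Delta_\beta - 1}\Vol_\ZZ(K_{\alpha,\beta}\setminus\Theta_{\alpha,\beta})$. Your ``core computation'' and ``main obstacle'' paragraphs are in effect a sketch of the proof of that cited corollary: choose $g$ supported on the lattice points of the subdiagram region, observe that its restriction to each orbit has the correct Newton polytope, pass from the local Milnor fibre to the global hypersurface in the torus, and apply Theorem~\ref{thm:2-10}. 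Your identification of the obstacle (local versus global Euler characteristic) is on target, and the toric-resolution cure you propose is precisely how \cite{M-T-new2} (and before it Varchenko and the GKZ school) handles it: the Newton non-degeneracy of $g$ guarantees that after pulling back along a toric modification subordinate to the Newton fan, the nearby cycle sheaf becomes computable strata-by-strata and the contributions assemble into the subdiagram volume. So your route is a self-contained unpacking of what the paper outsources; the trade-off is that you must actually verify the non-degeneracy and carry out the resolution argument, whereas the paper gets a one-line finish by citation.
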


\begin{proof}
First recall that we have $T_{\beta}=\Spec(\CC[M_{\beta}])\simeq (\CC^*)^{\d \Delta_{\beta}}$. For each face $\Delta_{\alpha}$ of $\sigma^{\vee}$ such that $\Delta_{\beta} \precneqq \Delta_{\alpha}$, consider the semigroups $\SS_{\alpha}:=M \cap \Delta_{\alpha}$ and $\SS_{\alpha,\beta}:=M_{\beta}^{\prime} \cap K_{\alpha,\beta}$. In the special case when $\Delta_{\alpha}=\sigma^{\vee}$, we set also $\SS_{\sigma, \beta}:=M_{\beta}^{\prime}\cap p_{\beta}(\sigma^{\vee})$. Then for any face $\Delta_{\alpha} \prec \sigma^{\vee}$ such that $\Delta_{\beta} \precneqq \Delta_{\alpha}$ it is easy to see that
\begin{equation}
\SS_{\alpha}+M_{\beta}=\SS_{\alpha, \beta}\oplus M_{\beta}
\end{equation}
and in a neighborhood of $T_{\beta}$ in $X$ we have
\begin{eqnarray}
\overline{T_{\alpha}}
&=&\Spec(\CC[\SS_{\alpha}+M_{\beta}])\\
&=&\Spec(\CC[\SS_{\alpha, \beta}]) \times T_{\beta}
\end{eqnarray}
(see the proof of \cite[Chapter 5, Theorem 3.1]{G-K-Z}). In particular, for $\Delta_{\alpha}=\sigma^{\vee}$ we have
\begin{equation}
X=\Spec(\CC[\SS_{\sigma, \beta}]) \times T_{\beta}
\end{equation}
in a neighborhood of $T_{\beta}$. More precisely, there exists a unique point $q \in X_{\sigma, \beta}:=\Spec(\CC[\SS_{\sigma, \beta}])$ such that $\{ q\} \times T_{\beta} = T_{\beta}$. Now let us take a face $\Delta_{\alpha} \prec \sigma^{\vee}$ such that $\Delta_{\beta} \precneqq \Delta_{\alpha}$ and set $X_{\alpha, \beta}:=\Spec(\CC[\SS_{\alpha, \beta}])$. Then by the inclusion $\SS_{\alpha, \beta}\longhookrightarrow \SS_{\sigma, \beta}$ we obtain a surjective homomorphism
\begin{equation}
\CC[\SS_{\sigma, \beta}] \longtwoheadrightarrow \CC[\SS_{\alpha, \beta}]
\end{equation}
of $\CC$-algebras and hence a closed embedding $X_{\alpha, \beta} \longhookrightarrow X_{\sigma, \beta}$. Denote by $T_{\alpha, \beta}$ the open dense torus $\Spec(\CC[M_{\beta}^{\prime} \cap \LL(K_{\alpha,\beta})])\simeq (\CC^*)^{\d \Delta_{\alpha}-\d \Delta_{\beta}}$ of the toric variety $X_{\alpha, \beta}$. Note that we have $T_{\alpha} \simeq T_{\alpha, \beta} \times T_{\beta}$. Now let $v_1, v_2, \ldots , v_m $ be generators of the semigroup $\SS_{\sigma, \beta}$ and consider a surjective morphism
\begin{equation}
\CC [t_1, t_2, \ldots , t_m] \longrightarrow \CC [\SS_{\sigma, \beta}]
\end{equation}
of $\CC$-algebras defined by $t_i \longmapsto [v_i]$. Then it induces a closed embedding $X_{\sigma, \beta} \longhookrightarrow \CC^m$ by which the point $q \in X_{\sigma, \beta}$ is sent to $0 \in \CC^m$. If we consider $T_{\alpha, \beta}$ as a locally closed subset of $\CC^m$ by this embedding, then the linking number $l_{\alpha, \beta}$ of $T_{\alpha}$ along $T_{\beta}$ is given by
\begin{equation}
l_{\alpha, \beta}=\chi(\psi_f(\CC_{T_{\alpha, \beta} })_0),
\end{equation}
where $f \colon \CC^m \longrightarrow \CC$ is a generic linear form. By applying Theorem \ref{thm:pre-10} to the closed embedding $X_{\alpha, \beta}\longhookrightarrow \CC^m$, we obtain
\begin{equation}
l_{\alpha, \beta}=\chi(\psi_g(\CC_{T_{\alpha, \beta}})_0),
\end{equation}
where we set $g:=f|_{X_{\alpha, \beta}}$. Finally it follows from \cite[Corollary 3.6]{M-T-new2} (whose special case used here can be deduced also from the proof of \cite[Chapter 10, Theorem 2.12]{G-K-Z}) that
\begin{equation}
l_{\alpha,\beta}=(-1)^{\d \Delta_{\alpha}-\d \Delta_{\beta}-1} \Vol_{\ZZ}(K_{\alpha,\beta} \setminus \Theta_{\alpha, \beta}).
\end{equation}
This completes the proof.
\end{proof}

Since the Euler obstruction $\Eu_X \colon X \longrightarrow \ZZ$ of $X$ is constant on each $T$-orbit $T_{\alpha}$ ($\Delta_{\alpha} \prec \sigma^{\vee}$), we denote by $\Eu(\Delta_{\alpha})$ the value of $\Eu_X$ on $T_{\alpha}$. Then we have

\begin{corollary}\label{cor:6-3-1}
All the values $\Eu(\Delta_{\alpha})$ of $\Eu_X \colon X \longrightarrow \ZZ$ are determined by induction on codimensions of faces of $\sigma^{\vee}$ as follows:
\begin{enumerate}\renewcommand{\labelenumi}{{\rm (\roman{enumi})}}
\item $\Eu(\sigma^{\vee}):=\Eu_X(T)=1$,
\item $\Eu(\Delta_{\beta})=\sum_{\Delta_{\beta} \precneqq \Delta_{\alpha}} (-1)^{\d \Delta_{\alpha} -\d \Delta_{\beta} -1} \RSV_{\ZZ}(\Delta_{\alpha}, \Delta_{\beta}) \cdot \Eu(\Delta_{\alpha})$.
\end{enumerate}
\end{corollary}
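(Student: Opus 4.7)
The plan is to derive the corollary as an essentially formal combination of two ingredients already at our disposal: the recursive definition of $\Eu_X$ recalled at the beginning of Section \ref{sec:4}, and the explicit formula for the linking numbers in Theorem \ref{thm:4-3}. The orbit decomposition $X = \bigsqcup_{\Delta_\alpha \prec \sigma^\vee} T_\alpha$ is a Whitney stratification of $X = U_\sigma$, and under the face--orbit correspondence the relation $T_\beta \subsetneq \overline{T_\alpha}$ is equivalent to $\Delta_\beta \precneqq \Delta_\alpha$, so this stratification is the natural one to use in the recursion.

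Part (i) is immediate: $T_{\sigma^\vee} = T$ is an algebraic torus, hence smooth, hence contained in the regular locus of $X$, so the first clause of the definition of the Euler obstruction forces $\Eu(\sigma^\vee) = 1$.

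For part (ii), fix a face $\Delta_\beta \precneqq \sigma^\vee$ and a point $q \in T_\beta$; proceed by induction on $\cd T_\beta$, assuming $\Eu(\Delta_\alpha)$ is known for every $\Delta_\alpha$ with $\Delta_\beta \precneqq \Delta_\alpha$. Fixing a closed embedding $X \hookrightarrow \CC^m$ and a function $f$ satisfying the genericity conditions of the recursive definition at $q$, we have
\begin{equation*}
\Eu(\Delta_\beta) = \sum_{\Delta_\beta \precneqq \Delta_\alpha} \chi\bigl(T_\alpha \cap f^{-1}(\eta) \cap B(q;\e)\bigr) \cdot \Eu(\Delta_\alpha).
\end{equation*}
Using the identity $\chi(T_\alpha \cap f^{-1}(\eta) \cap B(q;\e)) = \chi(\psi_f(\CC_{T_\alpha})_q)$ recalled in Section \ref{sec:4} (which rests on \cite[Propositions 4.2.2 and 4.1.22]{Dimca}), the coefficient appearing here is exactly the linking number $l_{\alpha,\beta}$ of Definition \ref{dfn:4-1}. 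Substituting the closed-form expression $l_{\alpha,\beta} = (-1)^{\d \Delta_\alpha - \d \Delta_\beta - 1}\RSV_{\ZZ}(\Delta_\alpha,\Delta_\beta)$ from Theorem \ref{thm:4-3} yields precisely the formula claimed in (ii).

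No genuine obstacle arises: the substantive content --- reducing a generic Milnor-fiber Euler characteristic to the relative subdiagram volume of a pair of cones --- has been accomplished in Theorem \ref{thm:4-3}, and what remains is essentially bookkeeping, together with the (standard) verification that the toric stratification is Whitney and that a generic linear form on $\CC^m$, restricted via $X \hookrightarrow \CC^m$, satisfies the required conormal transversality at $q$. Once Theorem \ref{thm:4-3} is in hand, the corollary follows with no further computation.
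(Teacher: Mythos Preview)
Your proof is correct and matches the paper's approach exactly: the corollary is stated without a separate proof in the paper precisely because it is the formal combination of the recursive definition of $\Eu_X$ (via the orbit stratification) with the formula $l_{\alpha,\beta} = (-1)^{\d\Delta_\alpha - \d\Delta_\beta - 1}\RSV_{\ZZ}(\Delta_\alpha,\Delta_\beta)$ from Theorem~\ref{thm:4-3}. Your write-up simply makes explicit what the paper leaves implicit.
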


\subsection{The case of toric varieties associated with lattice points I}\label{sec:4-2}

Now let us consider a special case of projective toric varieties associated with lattice points. We inherit the situation and the notations in Section \ref{sec:2}. Let $A \subset M=\ZZ^n$ be a finite subset of $M=\ZZ^n$ such that the convex hull $P$ of $A$ in $M_{\RR}$ is $n$-dimensional. Let $N=\Hom_{\ZZ}(M,\ZZ)=M^*$ be the dual $\ZZ$-lattice of $M$ and set $N_{\RR}:=\RR \otimes_{\ZZ}N$. Since $N_{\RR}$ is the dual vector space of $M_{\RR}$, a point $u \in N_{\RR}$ is a linear form on $M_{\RR}$.

\begin{definition}
For $u \in N_{\RR}$ we define the supporting face $\Delta(P,u)$ of $u$ in $P$ by
\begin{equation}
\Delta(P,u):=\left\{ v \in P \ \left|\ \langle u,v\rangle =\min_{w \in P}\langle u,w \rangle \right. \right\}.
\end{equation}
\end{definition}

For each face $\Delta \prec P$ of $P$, set
\begin{equation}
\sigma_{\Delta}:=\{ u \in N_{\RR} \ |\ \Delta(P,u) =\Delta\}.
\end{equation}
Then we obtain a decomposition of $N_{\RR}$:
\begin{equation}
N_{\RR}=\bigsqcup_{\Delta \prec P} \sigma_{\Delta}
\end{equation}
and $\Sigma_P:=\{ \overline{\sigma_{\Delta}}\ |\ \Delta \prec P\}$ is a complete fan in $N_{\RR}$. We call $\Sigma_P$ the normal fan of $P$. Let $X_{\Sigma_P}$ be the complete toric variety associated with $\Sigma_P$ and denote by $T$ its open dense torus. By \cite[Theorem 2.13]{Oda}, if $P$ is sufficiently large and $A=P \cap M$, the natural morphism $\Phi_A \colon X_{\Sigma_P} \longrightarrow \PP^{\sharp A-1}$ associated with $A$ induces an isomorphism $X_{\Sigma_P} \simto X_A$. Note that in this case the toric variety $X_A$ is normal. Let us give a formula for $\Eu_{X_A}$ in this special but important case where $X_{\Sigma_P} \simto X_A$. For a face $\Delta_{\alpha} \prec P$ of $P$, we denote by $T_{\alpha}$ the $T$-orbit in $X_{\Sigma_P} \simeq X_A$ which corresponds to the cone $\overline{ \sigma_{\Delta_{\alpha}}}$ in $\Sigma_P$. Then we obtain a decomposition $X_A\simeq X_{\Sigma_P}=\bigsqcup_{\Delta_{\alpha} \prec P} T_{\alpha}$ of $X_A$ into $T$-orbits. Now let $\Delta_{\alpha}$, $\Delta_{\beta}$ be two faces of $P$ such that $\Delta_{\beta}\precneqq \Delta_{\alpha}$. Let $v \in \Delta_{\beta}$ be a vertex of of the smaller face $\Delta_{\beta}$ and $\sigma := \overline{\sigma_{ \{ v\} }} \in \Sigma_P$ the maximal cone which corresponds to the $0$-dimensional face $\{v \} \prec P$. Then $T_{\alpha}$ and $T_{\beta}$ are contained in the affine open subset $U_{\sigma}= \Spec (\CC [\sigma^{\vee} \cap M])\subset X_{\Sigma_P}$ and we can apply our results in Subsection \ref{sec:4-1}. Indeed, by the dilation action of the multiplicative group $\RR_{>0}$ on $M_{\RR}$, we obtain the equality $\RR_{>0}(P-v)=\sigma^{\vee}$ in $M_{\RR}$ and the natural correspondence:
\begin{equation}
\{ \text{faces of $P$ containing $v$}\} \overset{\text{1:1}}{\longleftrightarrow} \{ \text{faces of $\sigma^{\vee}$}\}.
\end{equation}
Note that this correspondence is compatible with the ones for $T$-orbits in $X_A$ and $X_{\Sigma_P}$. Therefore, by taking the two faces of $\sigma^{\vee}$ which correspond to $\Delta_{\alpha}$ and $\Delta_{\beta}$ via this correspondence, we can define the normalized relative subdiagram volume $\RSV_{\ZZ}(\Delta_{\alpha}, \Delta_{\beta})$ by Definition \ref{dfn:4-2} and give a formula for the Euler obstruction $\Eu_{X_A} \colon X_A \longrightarrow \ZZ$ as follows. Since $\Eu_{X_A}$ is constant on each $T$-orbit, for a face $\Delta_{\alpha} \prec P$ of $P$ denote by $\Eu(\Delta_{\alpha})$ the value of $\Eu_{X_A}$ on $T_{\alpha}$. Then by Corollary \ref{cor:6-3-1} all the values $\Eu(\Delta_{\alpha})$ are determined by induction on codimensions of faces of $P$ as follows:
\begin{enumerate}\renewcommand{\labelenumi}{{\rm (\roman{enumi})}}
\item $\Eu(P):=\Eu_{X_A}(T)=1$,
\item $\Eu(\Delta_{\beta})=\sum_{\Delta_{\beta} \precneqq\Delta_{\alpha}} (-1)^{\d \Delta_{\alpha} -\d \Delta_{\beta} -1} \RSV_{\ZZ}(\Delta_{\alpha}, \Delta_{\beta}) \cdot \Eu(\Delta_{\alpha})$.
\end{enumerate}

\begin{example}\label{exa:6-8}
We give an example of integral convex polytopes for which the degree of the $A$-discriminant is easily computed by our method. For a $\ZZ$-basis $\{m_1,m_2,m_3\}$ of $M \simeq \ZZ^3$, let $P$ be the $3$-dimensional simplex with vertices $v_1=m_1$, $v_2=m_2$, $v_3=2m_3$, $v_4=0$ and set $A:=P \cap M=\{0, m_1, m_2, m_3, 2m_3\}$. Then we can easily check that the condition in \cite[Theorem 2.13]{Oda} is satisfied. Namely the line bundle on $X_{\Sigma_P}$ associated with $P$ is very ample and $X_A \simeq X_{\Sigma_P}$ in $\PP^4$ in this case.

Let us compute the values of the Euler obstruction $\Eu_{X_A}$ of $X_A$ by our algorithm.

For $\alpha \subset \{1,2,3,4\}$, we denote by $\Delta_{\alpha}$ the face of $P$ whose vertices are $\{v_i \ |\ i \in \alpha\}$.

We can easily determine the values of $\Eu_{X_A}$ on the $2$ and $3$-dimensional $T$-orbits:
\begin{equation}\label{eq:5-1}
\Eu(P)=\Eu(\Delta_{123})=\Eu(\Delta_{124})=\Eu(\Delta_{134})=\Eu(\Delta_{234})=1.
\end{equation}

Starting from the values \eqref{eq:5-1}, we can determine the values of the Euler obstruction $\Eu_{X_A}$ on $1$-dimensional $T$-orbits:
\begin{equation}\label{eq:5-2}
\Eu(\Delta_{12})=0, \ \ \Eu(\Delta_{13})=\Eu(\Delta_{14})=\Eu(\Delta_{23})=\Eu(\Delta_{24})=\Eu(\Delta_{34})=1.
\end{equation}
For example, $\Eu(\Delta_{12})$ is computed as follows. 
\begin{eqnarray}
\Eu(\Delta_{12})
&=&-\RSV_{\ZZ}(P,\Delta_{12})\Eu(P)+\RSV_{\ZZ}(\Delta_{123},\Delta_{12})\Eu(\Delta_{123}) \\
& &+\RSV_{\ZZ}(\Delta_{124},\Delta_{12})\Eu(\Delta_{124}) \nonumber \\
&=&-2\cdot 1+1\cdot 1 +1\cdot 1=0.
\end{eqnarray}
Moreover, the values of the Euler obstruction $\Eu_{X_A}$ on $0$-dimensional $T$-orbits are determined from the values \eqref{eq:5-1} and \eqref{eq:5-2}:
\begin{equation}
\Eu(\Delta_1)=\Eu(\Delta_2)=0, \ \ \Eu(\Delta_3)=\Eu(\Delta_4)=1.
\end{equation}
Now let us compute the codimension and degree of the dual variety $X_A^*$ of $X_A$. By \eqref{eq:2-9}, we have
\begin{equation}
\delta_1=\delta_2=0, \hspace{10mm}\delta_3=2.
\end{equation}
Then by Theorem \ref{thm:2-4} we obtain
\begin{equation}\label{eq:4-27}
\cd X_A^*=3, \ \ \deg X_A^*=2.
\end{equation}

In this case the map $\Phi_A \colon (\CC^*)^3 \longrightarrow \PP^4$ is defined by $(a,b,c)\longmapsto [a:b:c:c^2:1]$. Then in the affine chart $\{ [\xi_1:\xi_2:\xi_3:\xi_4:\xi_5] \in \PP^4\ |\ \xi_1\neq 0\} \simeq \CC^4$ we have $X_A=\overline{{\rm im}\Phi_A} = \{(x,y,z,w)\in \CC^4\ |\ wz=y^2\}$. By this expression we can show \eqref{eq:4-27} directly. For a list of $X_A$ with large dual defect, see the recent results in \cite{C-C} and \cite{DiRocco}.
\end{example}

\subsection{The case of toric varieties associated with lattice points II}
From now on, we shall give a combinatorial description of $\Eu_{X_A}$ for the varieties $X_A$ defined by general finite subsets $A \subset M \simeq \ZZ^n$. We inherit the notations in Section \ref{sec:2}. Without loss of generality, we may assume that the rank of the affine $\ZZ$-lattice $M(A)$ generated by $A$ is $n$. Let $P$ be the convex hull of $A$ in $M_{\RR}$. For each face $\Delta_{\alpha}$ of $P$, consider the smallest affine subspace $\LL(\Delta_{\alpha})$ of $M_{\RR}$ containing $\Delta_{\alpha}$ and the affine $\ZZ$-lattice $M_{\alpha}:= M(A \cap \Delta_{\alpha})$ generated by $A \cap \Delta_{\alpha}$ in $\LL(\Delta_{\alpha})$. Now let us fix two faces $\Delta_{\alpha}$, $\Delta_{\beta}$ of $P$ such that $\Delta_{\beta} \prec \Delta_{\alpha}$. By taking a suitable affine transformation of the lattice $M(A)$, we may assume that the origin $0$ of $M(A)$ is a vertex of the smaller face $\Delta_{\beta}$. By this choice of the origin $0 \in \Delta_{\beta} \cap M(A)$, we define the subsemigroup $\SS_{\alpha}$ of $M_{\alpha}$ generated by $A \cap \Delta_{\alpha}$. Although $\SS_{\alpha}$ depends also on $\Delta_{\beta}$ etc., we denote it by $\SS_{\alpha}$ to simplify the notation. Denote by $M_{\alpha}/\Delta_{\beta}$ the quotient lattice $M_{\alpha}/ (M_{\alpha} \cap \LL(\Delta_{\beta}))$ of rank $(\d \Delta_{\alpha}-\d \Delta_{\beta})$. Then the following definitions are essentially due to \cite[Chapter 5, page 178]{G-K-Z}.

\begin{definition}[\cite{G-K-Z}]\label{dfn:6-5}
\begin{enumerate}\renewcommand{\labelenumi}{{\rm (\roman{enumi})}}
\item We denote by $\SS_{\alpha}/\Delta_{\beta}$ the image of $\SS_{\alpha} \subset M_{\alpha}$ in the quotient $\ZZ$-lattice $M_{\alpha}/\Delta_{\beta}$.
\item We denote by $K(\SS_{\alpha}/\Delta_{\beta})$ (resp. $K_+(\SS_{\alpha}/\Delta_{\beta})$) the convex hull of $\SS_{\alpha}/\Delta_{\beta}$ (resp. $(\SS_{\alpha}/\Delta_{\beta})\setminus \{0\}$) in $(M_{\alpha}/\Delta_{\beta})_{\RR}$ and set 
\begin{equation}
K_-(\SS_{\alpha}/\Delta_{\beta}):=\overline{K(\SS_{\alpha}/\Delta_{\beta}) \setminus K_+(\SS_{\alpha}/\Delta_{\beta})}.
\end{equation}
We call $K_-(\SS_{\alpha}/\Delta_{\beta})$ the subdiagram part of the semigroup $\SS_{\alpha}/\Delta_{\beta}$ and denote by $u(\SS_{\alpha}/\Delta_{\beta})$ its normalized $(\d \Delta_{\alpha}-\d \Delta_{\beta})$-dimensional volume with respect to the $\ZZ$-lattice $M_{\alpha}/\Delta_{\beta} \subset (M_{\alpha}/\Delta_{\beta})_{\RR}$. If $\Delta_{\alpha}=\Delta_{\beta}$, we set $u(\SS_{\alpha}/\Delta_{\alpha}):=1$.
\end{enumerate}
\end{definition}

Finally, recall the definition of the index $i(\Delta_{\alpha}, \Delta_{\beta}) \in \ZZ_{>0}$ in \cite[Chapter 5, (3.1)]{G-K-Z}.

\begin{definition}[\cite{G-K-Z}]\label{dfn:6-6}
For two faces $\Delta_{\alpha}$, $\Delta_{\beta}$ of $P$ such that $\Delta_{\beta} \prec \Delta_{\alpha}$, we define $i(\Delta_{\alpha}, \Delta_{\beta})$ to be the index
\begin{equation}
i(\Delta_{\alpha}, \Delta_{\beta}):=[M_{\alpha} \cap \LL(\Delta_{\beta}):M_{\beta}]
\end{equation}
\end{definition}

Now recall that by \cite[Chapter 5, Proposition 1.9]{G-K-Z} we have the basic correspondence:
\begin{equation}
\{ \text{faces of $P$}\} \overset{\text{1:1}}{\longleftrightarrow} \{ \text{$T$-orbits in $X_A$}\}.
\end{equation}
For a face $\Delta_{\alpha} \prec P$ of $P$, we denote by $T_{\alpha}$ the corresponding $T$-orbit in $X_A$. We also denote by $\Eu(\Delta_{\alpha})$ the value of the Euler obstruction $\Eu_{X_A} \colon X_A \longrightarrow \ZZ$ on $T_{\alpha}$.

\begin{theorem}\label{thm:6-7}
The values $\Eu(\Delta_{\alpha})$ are determined by:
\begin{enumerate}\renewcommand{\labelenumi}{{\rm (\roman{enumi})}}
\item $\Eu(P)=1$,
\item $\Eu(\Delta_{\beta})=\sum_{\Delta_{\beta} \precneqq \Delta_{\alpha}} (-1)^{\d \Delta_{\alpha}-\d \Delta_{\beta}-1}i(\Delta_{\alpha}, \Delta_{\beta}) \cdot u(\SS_{\alpha}/\Delta_{\beta}) \cdot \Eu(\Delta_{\alpha})$.
\end{enumerate}
\end{theorem}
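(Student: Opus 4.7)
The plan is to mimic the strategy used in Subsections \ref{sec:4-1} and \ref{sec:4-2}. First, by induction on codimension, it suffices to compute the linking numbers
\begin{equation}
l_{\alpha,\beta}:=\chi(\psi_f(\CC_{T_{\alpha}})_q)
\end{equation}
at a point $q\in T_{\beta}$, where $f$ is a generic local equation for $T_{\beta}$ inside a suitable ambient affine space. Indeed, the recursive definition of the Euler obstruction, together with the fact that $\Eu_{X_A}$ is constant on each $T$-orbit, reduces Theorem \ref{thm:6-7} to proving
\begin{equation}
l_{\alpha,\beta}=(-1)^{\d \Delta_{\alpha}-\d \Delta_{\beta}-1}\, i(\Delta_{\alpha},\Delta_{\beta})\cdot u(\SS_{\alpha}/\Delta_{\beta}).
\end{equation}

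Next I would localize $X_A$ near $T_{\beta}$ and describe the branches of $\overline{T_{\alpha}}$ along $T_{\beta}$. After the change of origin that places a vertex of $\Delta_{\beta}$ at $0 \in M(A)$, the closure of the torus orbit $T_{\alpha}$ in a neighborhood of $T_{\beta}$ is governed by the semigroup $\SS_{\alpha}$ generated by $A\cap \Delta_{\alpha}$. In the \emph{normal} situation of Subsection \ref{sec:4-2}, the argument of Theorem \ref{thm:4-3} produced a product decomposition of the form $\Spec(\CC[\SS_{\alpha,\beta}])\times T_{\beta}$ and yielded the RSV volume. In the general (non--normal) case one must replace the saturated lattice $M\cap \LL(\Delta_{\beta})$ by the affine lattice $M_{\beta}=M(A\cap \Delta_{\beta})$; the latter is a sublattice of $M_{\alpha}\cap \LL(\Delta_{\beta})$ of index exactly $i(\Delta_{\alpha},\Delta_{\beta})$. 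Consequently, when one pushes $\SS_{\alpha}+M_{\beta}$ into $\SS_{\alpha}\oplus M_{\beta}$ locally at $q$, the closure $\overline{T_{\alpha}}$ splits into exactly $i(\Delta_{\alpha},\Delta_{\beta})$ irreducible branches, each of which is a translate of the same local model $\Spec(\CC[\SS_{\alpha}/\Delta_{\beta}])\times T_{\beta}$ attached along a coset of $M_{\beta}$ in $M_{\alpha}\cap \LL(\Delta_{\beta})$. This explicit description of the branches (the equation \eqref{eq:4-53} alluded to in the introduction, recovering the statement of \cite[Chapter 5, Theorem 3.1]{G-K-Z}) is the main geometric input.

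Once this branch decomposition is in place, the computation reduces, by additivity of the Euler characteristic on the disjoint union of branches meeting $T_{\alpha}$, to a single model problem: the nearby cycle at $0$ of the constant sheaf on the open torus of $\Spec(\CC[\SS_{\alpha}/\Delta_{\beta}])$ with respect to a generic linear form. After applying Theorem \ref{thm:pre-10} to the closed embedding $\Spec(\CC[\SS_{\alpha}/\Delta_{\beta}])\hookrightarrow \CC^N$ coming from a choice of semigroup generators, the same result of \cite[Corollary 3.6]{M-T-new2} (or equivalently the proof of \cite[Chapter 10, Theorem 2.12]{G-K-Z}) used in Theorem \ref{thm:4-3} evaluates this Euler characteristic as
\begin{equation}
(-1)^{\d \Delta_{\alpha}-\d \Delta_{\beta}-1}\,u(\SS_{\alpha}/\Delta_{\beta}).
\end{equation}
Multiplying by the number $i(\Delta_{\alpha},\Delta_{\beta})$ of branches produces the desired formula for $l_{\alpha,\beta}$, and substituting into the recursive definition of $\Eu_{X_A}$ yields (i) and (ii).

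The main obstacle is the precise identification of the branches of $\overline{T_{\alpha}}$ at a point of $T_{\beta}$ and the verification that each of them is, \'etale--locally, a copy of the normal model governed by the subdiagram volume $u(\SS_{\alpha}/\Delta_{\beta})$. This requires a careful analysis of the inclusion of semigroups $\SS_{\alpha}+M_{\beta}\hookrightarrow (M_{\alpha}\cap \LL(\Delta_{\beta}))+\SS_{\alpha}/\Delta_{\beta}$ and the resulting normalization of a neighborhood of $T_{\beta}$ in $\overline{T_{\alpha}}$; once this is done, everything else is a direct extension of the arguments already given in Subsections \ref{sec:4-1} and \ref{sec:4-2}.
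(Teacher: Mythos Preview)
Your proposal is correct and follows essentially the same route as the paper: reduce to computing the linking number $l_{\alpha,\beta}$, exhibit the branch decomposition of $\overline{T_{\alpha}}$ along $T_{\beta}$ into $i(\Delta_{\alpha},\Delta_{\beta})$ copies of the model $\Spec(\CC[\SS_{\alpha}/\Delta_{\beta}])$, and then apply Theorem~\ref{thm:pre-10} and \cite[Corollary 3.6]{M-T-new2} on each branch. The only notable difference is technical: rather than working in affine charts of $X_A$, the paper passes to the affine cones ${\rm Cone}(\overline{T_{\alpha}})\subset {\rm Cone}(X_A)\subset \CC^{\sharp A}$ via the embedding $M_{\alpha}\hookrightarrow \Xi_{\alpha}:=M_{\alpha}\oplus \ZZ$, and then makes the branch decomposition completely explicit by writing down $\CC$-algebra surjections $I_k\colon \CC[(\tl{\SS}_{\alpha}+\Xi_{\beta})/\Xi_{\beta}]\twoheadrightarrow \CC[\SS_{\alpha}/\Delta_{\beta}]$ twisted by powers of a primitive $l$-th root of unity (where $l=i(\Delta_{\alpha},\Delta_{\beta})$), so that the open piece $W_{\alpha}$ is visibly $\{z^l=1\}\times (\CC^*)^{\d\Delta_{\alpha}-\d\Delta_{\beta}}$. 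This is exactly the concrete realization of what you describe as ``translates attached along cosets of $M_{\beta}$ in $M_{\alpha}\cap\LL(\Delta_{\beta})$,'' and it resolves precisely the obstacle you single out in your last paragraph.
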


\begin{proof}
Let $\Delta_{\alpha} \prec P$ be a face of $P$. Then by \cite[Chapter 5, Proposition 1.9]{G-K-Z} the closure $\overline{T_{\alpha}}$ of $T_{\alpha}$ in $X_A$ is isomorphic to the projective toric variety $X_{A \cap \Delta_{\alpha}} \subset \PP^{\sharp (A \cap \Delta_{\alpha})-1}$ defined by the finite subset $A \cap \Delta_{\alpha}$ in the lattice $M_{\alpha}=M(A \cap \Delta_{\alpha})\simeq \ZZ^{\d \Delta_{\alpha}}$. Moreover the cone ${\rm Cone}(\overline{T_{\alpha}})\subset \CC^{\sharp (A \cap \Delta_{\alpha})}$ over $\overline{T_{\alpha}}\subset \PP^{\sharp (A \cap \Delta_{\alpha})-1}$ is an affine variety as follows. Let
\begin{equation}
i_{\alpha} \colon M_{\alpha} \longhookrightarrow \Xi_{\alpha} :=M_{\alpha} \oplus \ZZ\simeq \ZZ^{\d \Delta_{\alpha}+1} 
\end{equation}
be the embedding defined by $v\longmapsto (v,1)$ and $\tl{\SS}_{\alpha}$ the subsemigroup of the lattice $\Xi_{\alpha}$ generated by $i_{\alpha}(A \cap \Delta_{\alpha})$ and $0 \in \Xi_{\alpha}$. Then by \cite[Chapter 5, Proposition 2.3]{G-K-Z} the cone ${\rm Cone}(\overline{T_{\alpha}}) \subset \CC^{\sharp (A \cap \Delta_{\alpha})}$ is isomorphic to the affine toric variety $\Spec(\CC[\tl{\SS}_{\alpha}])$. In the special case when $\Delta_{\alpha}=P$, we set $\Xi:=\Xi_{\alpha}$ ($=M(A)\oplus \ZZ$) and $\tl{\SS} :=\tl{\SS}_{\alpha}$ for short. Since $\tl{\SS}_{\alpha}$ is a subsemigroup of $\tl{\SS}$ via the inclusions $M_{\alpha} \subset M(A)$ and $\Xi_{\alpha} \subset \Xi$, there exists a natural surjection
\begin{equation}
\CC[\tl{\SS}] \longtwoheadrightarrow \CC[\tl{\SS}_{\alpha}].
\end{equation}
This corresponds to the closed embedding
\begin{equation}
{\rm Cone}(\overline{T_{\alpha}}) \simeq \Spec(\CC[\tl{\SS}_{\alpha}])\longhookrightarrow {\rm Cone}(X_A) \simeq \Spec(\CC[\tl{\SS}]).
\end{equation}
Now let $\Delta_{\alpha}$ and $\Delta_{\beta}$ be two faces of $P$ such that $\Delta_{\beta} \precneqq \Delta_{\alpha}$ ($\Longleftrightarrow$ $T_{\beta} \subsetneq \overline{T_{\alpha}}$). We have to determine the linking number $l_{\alpha, \beta}$ of $T_{\alpha}$ along $T_{\beta}$ (defined as in Definition \ref{dfn:4-1}). Since the singularity of $\overline{T_{\alpha}}$ along $T_{\beta}$ is the same as that of ${\rm Cone}(\overline{T_{\alpha}})\simeq \Spec(\CC[\tl{\SS}_{\alpha}])$ along ${\rm Cone}(T_{\beta})\simeq \Spec (\CC[\Xi_{\beta}]) \simeq (\CC^*)^{\d \Delta_{\beta} +1}$, it suffices to study the pair ${\rm Cone}(T_{\beta}) \subset {\rm Cone}(\overline{T_{\alpha}})$. Moreover, by the proof of \cite[Chapter 5, Theorem 3.1]{G-K-Z}, in a neighborhood of ${\rm Cone}(T_{\beta})$ in ${\rm Cone}(X_A)\subset \CC^{\sharp A}$, we have
\begin{gather}
{\rm Cone}(\overline{T_{\alpha}}) = \Spec(\CC[\tl{\SS}_{\alpha}+\Xi_{\beta}]),\\
{\rm Cone}(X_A)= \Spec(\CC[\tl{\SS} +\Xi_{\beta}])
\end{gather}
and the fibers of the morphisms
\begin{gather}
{\rm Cone}(\overline{T_{\alpha}})\simeq \Spec(\CC[\tl{\SS}_{\alpha}+\Xi_{\beta}])\longrightarrow {\rm Cone}(T_{\beta})\simeq \Spec (\CC[\Xi_{\beta}]),\\
{\rm Cone}(X_A)\simeq \Spec(\CC[\tl{\SS} +\Xi_{\beta}]) \longrightarrow {\rm Cone}(T_{\beta})\simeq \Spec (\CC[\Xi_{\beta}])
\end{gather}
induced by $\Xi_{\beta} \subset \tl{\SS}_{\alpha}+\Xi_{\beta}$ and $\Xi_{\beta} \subset \tl{\SS}+\Xi_{\beta}$ are $\Spec(\CC[(\tl{\SS}_{\alpha}+\Xi_{\beta})/\Xi_{\beta}])$ and $\Spec(\CC[(\tl{\SS}+\Xi_{\beta})/\Xi_{\beta}])$ respectively. Let us set
\begin{gather}
Y_{\alpha}:=\Spec(\CC[(\tl{\SS}_{\alpha}+\Xi_{\beta})/\Xi_{\beta}]),\\
Y:=\Spec(\CC[(\tl{\SS}+\Xi_{\beta})/\Xi_{\beta}]).
\end{gather}
Since the natural morphism
\begin{equation}
(\tl{\SS}_{\alpha}+\Xi_{\beta})/\Xi_{\beta}\longrightarrow (\tl{\SS}+\Xi_{\beta})/\Xi_{\beta}
\end{equation}
is injective, we obtain a surjection
\begin{equation}
\CC[(\tl{\SS}+\Xi_{\beta})/\Xi_{\beta}]\longtwoheadrightarrow \CC[(\tl{\SS}_{\alpha}+\Xi_{\beta})/\Xi_{\beta}]
\end{equation}
and hence a closed embedding $Y_{\alpha} \longhookrightarrow Y$. Note that $Y \cap {\rm Cone}(T_{\beta})=Y_{\alpha} \cap {\rm Cone}(T_{\beta})$ consists of a single point. We denote this point by $q$. Now let us consider the open subset $W_{\alpha}:=\Spec(\CC[\Xi_{\alpha}/\Xi_{\beta}])$ of $Y_{\alpha}=\Spec(\CC[(\tl{\SS}_{\alpha}+\Xi_{\beta})/\Xi_{\beta}])$. It is easy to see that $W_{\alpha}$ is the intersection of ${\rm Cone}(T_{\alpha})\simeq (\CC^*)^{\d \Delta_{\alpha}+1}$ and $Y_{\alpha}$. Let $v_1, v_2,\ldots , v_m $ be generators of the semigroup $(\tl{\SS}+\Xi_{\beta})/\Xi_{\beta}$ and consider a surjective morphism
\begin{equation}
\CC [t_1, t_2, \ldots , t_m] \longrightarrow \CC [(\tl{\SS}+\Xi_{\beta})/\Xi_{\beta}]
\end{equation}
of $\CC$-algebras defined by $t_i \longmapsto [v_i]$. Then it induces a closed embedding $Y \longhookrightarrow \CC^m$ by which the point $q \in Y$ is sent to $0 \in \CC^m$. If we consider $W_{\alpha}$ as a locally closed subset of $\CC^m$ by this embedding, then the linking number $l_{\alpha, \beta}$ is given by
\begin{equation}
l_{\alpha, \beta}=\chi(\psi_f(\CC_{W_{\alpha}})_0),
\end{equation}
where $f \colon \CC^m \longrightarrow \CC$ is a generic linear form. By applying Theorem \ref{thm:pre-10} to the closed embedding $Y_{\alpha} \longhookrightarrow \CC^m$, we obtain also
\begin{equation}
l_{\alpha, \beta}=\chi(\psi_g(\CC_{W_{\alpha}})_0),
\end{equation}
where we set $g:=f|_{Y_{\alpha}}$. In order to calculate this last term $\chi(\psi_g(\CC_{W_{\alpha}})_0)$, we shall investigate the structure of $W_{\alpha}$ more precisely. By the inclusion $(\Xi_{\beta})_{\RR} \subset (\Xi_{\alpha})_{\RR}$, we set $\Xi_{\alpha}^{\prime}:=\Xi_{\alpha}\cap (\Xi_{\beta})_{\RR}$. Since we assumed that the origin of the lattice $M_{\alpha}$ is a vertex of $\Delta_{\beta}$, the two lattices $\Xi_{\alpha}^{\prime}$ and $\Xi_{\beta}$ contain the subgroup $\{ (0,t) \in \Xi_{\alpha}\ | \ t \in \ZZ\} \simeq \ZZ$ of $\Xi_{\alpha}$. Hence we obtain an isomorphism
\begin{equation}
\Xi_{\alpha}^{\prime}/\Xi_{\beta}\simeq (M_{\alpha} \cap \LL(\Delta_{\beta}))/M_{\beta}.
\end{equation}
Namely $\Xi_{\beta}$ is a sublattice of $\Xi_{\alpha}^{\prime}$ with index $l:=i(\Delta_{\alpha}, \Delta_{\beta})$. By the fundamental theorem of finitely generated abelian groups, we may assume that $G:=\Xi_{\alpha}^{\prime}/\Xi_{\beta}$ is a cyclic group $\ZZ/l \ZZ$ of order $l=i(\Delta_{\alpha}, \Delta_{\beta})$. Now let us take a sublattice $\Xi_{\alpha}^{\prime \prime}$ of $\Xi_{\alpha}$ such that $\Xi_{\alpha}=\Xi_{\alpha}^{\prime}\oplus \Xi_{\alpha}^{\prime \prime}$. Then we have
\begin{equation}
\Xi_{\alpha}/\Xi_{\beta}\simeq G \oplus \Xi_{\alpha}^{\prime \prime}
\end{equation}
and
\begin{eqnarray}
W_{\alpha}
&\simeq &\Spec(\CC [G]) \times \Spec (\CC[\Xi_{\alpha}^{\prime \prime}])\\
&\simeq &\{ z\in \CC \ |\ z^l=1\} \times (\CC^*)^{\d \Delta_{\alpha} -\d \Delta_{\beta}}.
\end{eqnarray}
Let $\Psi \colon \Xi_{\alpha} \longtwoheadrightarrow G=\ZZ/l\ZZ$ be the composite of
\begin{equation}
\Xi_{\alpha}\longtwoheadrightarrow \Xi_{\alpha}^{\prime}\longtwoheadrightarrow G=\Xi_{\alpha}^{\prime}/\Xi_{\beta}.
\end{equation}
For $s \in \Xi_{\alpha}$, we define an integer $e(s)\in \{0,1,2, \ldots, l-1\}$ by $\Psi(s)=[e(s)]\in G\simeq \ZZ/l\ZZ$. Then for $k=0,1,2, \ldots, l-1$ there exist surjective homomorphisms
\begin{equation}
I_k \colon \CC[(\tl{\SS}_{\alpha}+\Xi_{\beta})/\Xi_{\beta}] \longtwoheadrightarrow \CC[(\tl{\SS}_{\alpha}+\Xi_{\alpha}^{\prime})/\Xi_{\alpha}^{\prime}]
\end{equation}
of $\CC$-algebras defined by
\begin{equation}
\sum_{s_i \in \tl{\SS}_{\alpha}} a_i\cdot [s_i+\Xi_{\beta}] \longmapsto \sum_{s_i \in \tl{\SS}_{\alpha}}a_i \cdot \mu_l^{ke(s_i)}\cdot [s_i+\Xi_{\alpha}^{\prime}],
\end{equation}
where $\mu_l=\exp{\(\frac{2\pi\sqrt{-1}}{l}\)}$ is the primitive $l$-th root of unity. On the other hand, since $\{ (0,t) \in \Xi_{\alpha}\ | \ t \in \ZZ\} \simeq \ZZ$ is a subgroup of $\Xi_{\alpha}^{\prime}$, we have isomorphisms
\begin{gather}
\Xi_{\alpha}/\Xi_{\alpha}^{\prime}\simeq M_{\alpha}/\Delta_{\beta}=M_{\alpha}/(M_{\alpha}\cap \LL(\Delta_{\beta})),\\
(\tl{\SS}_{\alpha}+\Xi_{\alpha}^{\prime})/\Xi_{\alpha}^{\prime}\simeq \SS_{\alpha}/\Delta_{\beta}.
\end{gather}
Let us set $Z_{\alpha}:=\Spec(\CC[\SS_{\alpha}/\Delta_{\beta}])$. Then by the above surjective homomorphisms $I_k$ ($k=0,1,2,\ldots,l-1$) we obtain closed embeddings
\begin{equation}\label{eq:4-53}
\iota_k \colon Z_{\alpha} \longhookrightarrow Y_{\alpha} \quad (k=0,1,2,\ldots,l-1).
\end{equation}
We can see that the images of these embeddings $\iota_k \colon Z_{\alpha} \longhookrightarrow Y_{\alpha}$ are the explicit realizations of the branches along $T$-orbits found in \cite[Chapter 5, Theorem 3.1]{G-K-Z}. Indeed, denote by $T_0$ the open dense torus $\Spec(\CC[\Xi_{\alpha}^{\prime \prime}])\simeq (\CC^*)^{\d \Delta_{\alpha} -\d \Delta_{\beta}}$ of $Z_{\alpha}$. Then the open dense subset $W_{\alpha} \subset Y_{\alpha}$ is a direct sum $T_0 \sqcup T_0 \sqcup \cdots \sqcup T_0$ of $l$ copies of $T_0$. For $k=0,1,\ldots, l-1$, consider also surjective homomorphisms
\begin{equation}
I_k^{\prime} \colon \CC[\Xi_{\alpha}/\Xi_{\beta}] \longtwoheadrightarrow \CC[\Xi_{\alpha}^{\prime \prime}] \simeq \CC[\Xi_{\alpha}/\Xi_{\alpha}^{\prime}]
\end{equation}
of $\CC$-algebras defined by
\begin{equation}
\dsum_{s_i \in \Xi_{\alpha}}a_i \cdot [s_i+\Xi_{\beta}] \longmapsto \dsum_{s_i \in \Xi_{\alpha}} a_i \cdot \mu_l^{ke(s_i)}\cdot [s_i+\Xi_{\alpha}^{\prime}].
\end{equation}
Then by this homomorphism $I_k^{\prime}$ we obtain a closed embedding
\begin{equation}
\iota_k^{\prime} \colon T_0 \longhookrightarrow W_{\alpha}
\end{equation}
which induces an isomorphism from $T_0$ to the ($k+1$)-th component of $W_{\alpha}$. Moreover $\iota_k^{\prime}$ fits into the commutative diagram
\begin{equation}
\xymatrix{
Z_{\alpha} \ar@{^{(}->}[r]^{\iota_k} & Y_{\alpha} \\
T_0 \ar@{_{(}->}[u]\ar@{^{(}->}[r]^{\iota_k^{\prime}}& W_{\alpha}. \ar@{_{(}->}[u]
}
\end{equation}
Then we have an isomorphism
\begin{equation}
\bigoplus_{k=0}^{l-1}(\iota_k)_*(\CC_{T_0})\simeq \CC_{W_{\alpha}}
\end{equation}
in $\Dbc(Y_{\alpha})$. Therefore, applying Theorem \ref{thm:pre-10} to $\iota_k$ ($k=0,1,2,\ldots,l-1$), we obtain
\begin{equation}
l_{\alpha, \beta}=\sum_{k=0}^{l-1}\chi(\psi_{g_k}(\CC_{T_0})_0),
\end{equation}
where we set $g_k:=g \circ \iota_k \in \CC[\SS_{\alpha}/\Delta_{\beta}]$ ($k=0,1,2,\ldots,l-1$). Finally by \cite[Corollary 3.6]{M-T-new2} (see also the proof of \cite[Chapter 10, Theorem 2.12]{G-K-Z}) we get
\begin{equation}
l_{\alpha, \beta}=(-1)^{\d \Delta_{\alpha}-\d \Delta_{\beta}-1}i(\Delta_{\alpha}, \Delta_{\beta}) \cdot u(\SS_{\alpha}/\Delta_{\beta}).
\end{equation}
This completes the proof.
\end{proof}

\section{Characteristic cycles of constructible sheaves}\label{sec:7}
In this section, we give a formula for the characteristic cycles of $T$-invariant constructible sheaves (see Definition \ref{newdef} below) on toric varieties and apply it to GKZ hypergeometric systems and intersection cohomology complexes.

First, let $X$ be a (not necessarily normal) toric variety over $\CC$ and $T\subset X$ the open dense torus which acts on $X$ itself. Let $X=\bigsqcup_{\alpha}X_{\alpha}$ be the decomposition of $X$ into $T$-orbits.

\begin{definition}\label{newdef}
\begin{enumerate}
\item We say that a constructible sheaf $\F$ on $X$ is $T$-invariant if $\F|_{X_{\alpha}}$ is a locally constant sheaf of finite rank for any $\alpha$.
\item We say that a constructible object $\F \in \Dbc(X)$ is $T$-invariant if the cohomology sheaf $H^j(\F)$ of $\F$ is $T$-invariant for any $j \in \ZZ$.
\end{enumerate}
\end{definition}

Note that the so-called $T$-equivariant constructible sheaves on $X$ are $T$-invariant in the above sense. Recall also that to any object $\F$ of $\Dbc(X)$ we can associate a $\ZZ$-valued constructible function $\rho(\F) \in \CF_{\ZZ}(X)$ defined by
\begin{equation}
\rho(\F)(x)=\dsum_{j \in \ZZ} (-1)^j \d_{\CC}H^j(\F)_x  \hspace{5mm}(x \in X).
\end{equation}
If moreover $\F$ is $T$-invariant, clearly $\rho(\F)$ is constant on each $T$-orbit $X_{\alpha}$. In this case, we denote the value of $\rho(\F)$ on $X_{\alpha}$ by $\rho(\F)_{\alpha} \in \ZZ$. By using the fact that vanishing and nearby cycle functors send distinguished triangles to distinguished triangles, we can easily prove the following.

\begin{proposition}\label{prp:7-2}
Let $f \colon X \longrightarrow \CC$ be a non-constant regular function on the toric variety $X$ and set $X_0=\{x\in X\ |\ f(x)=0\} \subset X$. Then for any $T$-invariant object $\F\in \Dbc(X)$ and $x \in X_0$ we have
\begin{gather}
\chi(\psi_f(\F)_x)=\dsum_{\alpha}\rho(\F)_{\alpha}\cdot\chi(\psi_f(\CC_{X_{\alpha}})_x),\\
\chi(\varphi_f(\F)_x)=\dsum_{\alpha}\rho(\F)_{\alpha}\cdot\chi(\varphi_f(\CC_{X_{\alpha}})_x).
\end{gather}
\end{proposition}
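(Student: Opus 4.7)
My plan is to reduce both identities to a Milnor-fiber additivity computation, exploiting that the $T$-invariance of $\F$ lets us regroup contributions by $T$-orbits. The hinted property that $\psi_f$ and $\varphi_f$ preserve distinguished triangles makes the assignments $\F \mapsto \chi(\psi_f(\F)_x)$ and $\F \mapsto \chi(\varphi_f(\F)_x)$ additive on distinguished triangles in $\Dbc(X)$.

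First I would apply this additivity to the canonical truncation triangles $\tau_{\leq k}\F \to \F \to \tau_{>k}\F \overset{+1}{\longrightarrow}$: taking $\psi_f$, then stalks at $x$, then Euler characteristics (which is additive on distinguished triangles of complexes of $\CC$-vector spaces), one rewrites each side of the desired identity as the corresponding alternating sum over the cohomology sheaves $H^j(\F)$. Since $\rho(\F)_\alpha = \sum_j (-1)^j \mathrm{rk}(H^j(\F)|_{X_\alpha})$, this reduces the problem to the case where $\F$ is a single $T$-invariant constructible sheaf of constant rank $r_\alpha = \rho(\F)_\alpha$ on each orbit $X_\alpha$.

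For such an $\F$, I would invoke the Milnor-fiber description recalled in the preliminaries (Dimca, Proposition 4.2.2), giving
\[
\psi_f(\F)_x \simeq R\Gamma(F_x, \F|_{F_x}),
\]
where $F_x = B(x;\e)\cap f^{-1}(\eta)$. Choosing a Whitney stratification $X = \bigsqcup_\beta Y_\beta$ that refines the $T$-orbit decomposition and along which $\F$ is locally constant, additivity of topological Euler characteristics for constructible sheaves along the induced stratification $F_x = \bigsqcup_\beta(F_x \cap Y_\beta)$ (using that a rank-$r$ local system on $Y_\beta$ contributes $r\cdot \chi(F_x \cap Y_\beta)$) yields
\[
\chi(\psi_f(\F)_x) = \sum_\beta r_{\alpha(\beta)}\cdot \chi(F_x \cap Y_\beta) = \sum_\alpha \rho(\F)_\alpha \cdot \chi(F_x \cap X_\alpha),
\]
where $\alpha(\beta)$ is the unique $T$-orbit containing $Y_\beta$. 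Applying the same Milnor-fiber formula to $\CC_{X_\alpha}$ identifies $\chi(F_x \cap X_\alpha)$ with $\chi(\psi_f(\CC_{X_\alpha})_x)$, which establishes the first identity.

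For the vanishing cycle version, I would use the defining distinguished triangle $i_X^{-1}\F \to \psi_f(\F) \to \varphi_f(\F) \overset{+1}{\longrightarrow}$, combined with the tautological equality $\chi(\F_x) = \sum_\alpha \rho(\F)_\alpha \cdot \chi((\CC_{X_\alpha})_x)$ (both sides equal $\rho(\F)_{\alpha_0}$ for the unique orbit $X_{\alpha_0}\ni x$). Subtracting this from the first identity gives the $\varphi_f$-identity. The main technical step is the passage from stalk Euler characteristics to Milnor-fiber Euler characteristics via the stratification, but once a Whitney refinement of the orbit decomposition is fixed this is a standard additivity argument.
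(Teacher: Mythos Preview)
Your proposal is correct and follows the approach the paper indicates: the paper gives no detailed proof, only the one-line hint that $\psi_f$ and $\varphi_f$ send distinguished triangles to distinguished triangles, and your argument is a natural elaboration of that hint using the Milnor-fiber description (Dimca, Proposition 4.2.2) and the Euler-integral additivity (Dimca, Theorem 4.1.22) already recalled in the paper's preliminaries. One minor simplification: the $T$-orbit decomposition of a toric variety is itself a Whitney stratification, so the refinement to $\{Y_\beta\}$ is unnecessary and you can work directly with the $X_\alpha$.
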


Now let $X \longhookrightarrow Z$ be a closed embedding of the toric variety $X$ into a smooth algebraic variety $Z$ and $\F\in\Dbc(X)$ a $T$-invariant object. We consider $\F$ as an object in $\Dbc(Z)$ by this embedding and denote by $CC(\F)$ its characteristic cycle in the cotangent bundle $T^*Z$. Then there exist some integers $m_{\alpha} \in \ZZ$ such that
\begin{equation}
CC(\F)=\dsum_{\alpha}m_{\alpha} \left[\overline{T_{X_{\alpha}}^*Z}\right]
\end{equation}
in $T^*Z$. It is well-known that the coefficients $m_{\alpha}$ satisfy the formula
\begin{equation}
\rho(\F)=\dsum_{\alpha} (-1)^{\d X_{\alpha}}m_{\alpha} \cdot \Eu_{ \overline{X_{\alpha}} }.
\end{equation}
Moreover $m_{\alpha}$ are uniquely determined by this formula. Since the calculation of the Euler obstructions $\Eu_{\overline{X_{\alpha}}}$ does not depend on the choice of the embedding $X \longhookrightarrow Z$ (see \cite{Kashiwara}), the coefficients $m_{\alpha}$ do not depend on the choice of the smooth ambient space $Z$.

Now let $N \simeq \ZZ^n$ be a $\ZZ$-lattice of rank $n$ and $\sigma$ a strongly convex rational polyhedral cone in $N_{\RR}$. We take the dual $\ZZ$-lattice $M$ of $N$ and consider the polar cone $\sigma^{\vee}$ of $\sigma$ in $M_{\RR}$ as before. Then $X=\Spec(\CC[\sigma^{\vee} \cap M])$ is a normal toric variety and its open dense torus $T$ is $\Spec(\CC[M])$. We denote by $X_{\alpha}$ the $T$-orbit which corresponds to a face $\Delta_{\alpha}$ of $\sigma^{\vee}$ and consider the decomposition $X=\bigsqcup_{\Delta_{\alpha} \prec \sigma^{\vee}} X_{\alpha}$ of $X$ into $T$-orbits. In this situation, we have the following result.

\begin{theorem}\label{thm:7-3}
Let $X \longhookrightarrow Z$ be a closed embedding of $X$ into a smooth algebraic variety $Z$ and $\F \in \Dbc(X)$ a $T$-invariant object. Then the coefficients $m_{\beta} \in \ZZ$ in the characteristic cycle
\begin{equation}
CC(\F)=\dsum_{\Delta_{\beta} \prec \sigma^{\vee}}m_{\beta} \left[\overline{T_{X_{\beta}}^*Z} \right]
\end{equation}
are given by the formula
\begin{equation}\label{eq-1}
m_{\beta}=\dsum_{\Delta_{\beta} \prec \Delta_{\alpha} \prec \sigma^{\vee} }(-1)^{\d \Delta_{\alpha}} \rho(\F)_{\alpha}\cdot \RSV_{\ZZ}(\Delta_{\alpha}, \Delta_{\beta})
\end{equation}
(for the definition of the normalized relative subdiagram volume $\RSV_{\ZZ}(\Delta_{\alpha}, \Delta_{\beta})$ see Definition \ref{dfn:4-2}).
\end{theorem}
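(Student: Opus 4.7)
The plan is to compute the multiplicity $m_\beta$ directly from its definition $m_\beta = -\chi(\varphi_f(\F)_{x_\beta})$, where $f$ is a function on the smooth ambient space $Z$ satisfying conditions (i)--(iii) of Section \ref{sec:pre} at a reference point $x_\beta \in X_\beta$. Since $\F$ is $T$-invariant, the associated constructible function $\rho(\F)$ is constant on each $T$-orbit $X_\alpha$, and the vanishing-cycle form of Proposition \ref{prp:7-2} (applied to $f|_X$, using that the closed embedding $X \longhookrightarrow Z$ is compatible with $\varphi_f$) yields the decomposition
\[ \chi(\varphi_f(\F)_{x_\beta}) = \sum_\alpha \rho(\F)_\alpha \cdot \chi(\varphi_f(\CC_{X_\alpha})_{x_\beta}), \]
where only those $\alpha$ with $x_\beta \in \overline{X_\alpha}$, i.e.\ $\Delta_\beta \prec \Delta_\alpha$, contribute.

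Next I would treat the contributions according to whether $\alpha = \beta$ or $\Delta_\beta \precneqq \Delta_\alpha$. For $\alpha = \beta$, condition (iii) makes $x_\beta$ a non-degenerate critical point of $f|_{X_\beta}$; since $X_\beta$ is smooth of complex dimension $\d \Delta_\beta$, a standard Morse-theoretic computation identifies the Milnor fibre with a real sphere of dimension $\d \Delta_\beta - 1$, giving $\chi(\varphi_f(\CC_{X_\beta})_{x_\beta}) = (-1)^{\d \Delta_\beta - 1}$. For $\Delta_\beta \precneqq \Delta_\alpha$, the stalk $(\CC_{X_\alpha})_{x_\beta}$ vanishes, so the fundamental triangle forces $\chi(\varphi_f(\CC_{X_\alpha})_{x_\beta}) = \chi(\psi_f(\CC_{X_\alpha})_{x_\beta})$. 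I would then invoke the microlocal invariance of vanishing-cycle Euler characteristics for generic conormal covectors, which is guaranteed by condition (ii), to conclude that this quantity depends on $f$ only through the conormal direction $\grad f(x_\beta) \in T^*_{X_\beta} Z$. In particular, it coincides with the linking number $l_{\alpha,\beta}$ of Definition \ref{dfn:4-1}, which by Theorem \ref{thm:4-3} equals $(-1)^{\d \Delta_\alpha - \d \Delta_\beta - 1}\RSV_\ZZ(\Delta_\alpha,\Delta_\beta)$.

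Combining these ingredients and setting $m_\beta = -\chi(\varphi_f(\F)_{x_\beta})$, the $\alpha = \beta$ contribution becomes $(-1)^{\d \Delta_\beta}\rho(\F)_\beta$, which by the convention $\RSV_\ZZ(\Delta_\beta,\Delta_\beta) = 1$ fits into the general summand corresponding to $\alpha = \beta$. Adding the contributions from the faces with $\Delta_\beta \precneqq \Delta_\alpha$ and performing the sign bookkeeping yields the formula \eqref{eq-1} in the statement.

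The main obstacle I anticipate is the microlocal-invariance step. The function $f$ that one uses for computing $m_\beta$ is Morse on $X_\beta$, whereas the linking number $l_{\alpha,\beta}$ of Definition \ref{dfn:4-1} is set up with a function that vanishes identically on $T_\beta$. To identify the two Euler characteristics one must argue that, once the covector $(x_\beta, \grad f(x_\beta))$ lies in the smooth part of $T^*_{X_\beta} Z$ and avoids the closures $\overline{T^*_{X_\gamma} Z}$ for $\gamma \neq \beta$, the number $\chi(\varphi_f(\CC_{X_\alpha})_{x_\beta})$ depends only on that covector and not on the higher-order jets of $f$. This is the standard microlocal Morse-data invariance for Whitney-stratified spaces, but it deserves a careful formulation here because $X$ is singular inside the smooth ambient $Z$, and one must ensure that the stratification of $X$ by $T$-orbits remains a Whitney stratification when viewed in $Z$.
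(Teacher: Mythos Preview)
Your proposal is correct and close in spirit to the paper's proof: both begin with Proposition~\ref{prp:7-2} to reduce to the sheaves $\CC_{X_\alpha}$, and both ultimately rely on the computation behind Theorem~\ref{thm:4-3}. The difference lies in how the multiplicity $m_{\alpha,\beta}$ of $[\overline{T^*_{X_\beta}Z}]$ in $CC(\CC_{X_\alpha})$ is extracted. You argue abstractly: the Euler characteristic $\chi(\varphi_f(\CC_{X_\alpha})_{x_\beta})$ depends only on the generic conormal covector, hence coincides with the linking number $l_{\alpha,\beta}$, and Theorem~\ref{thm:4-3} finishes the job. The paper instead exploits the explicit local product decomposition $X\simeq X_{\sigma,\beta}\times T_\beta$ near $X_\beta$ (already used in the proof of Theorem~\ref{thm:4-3}) to replace $Z$ by $\CC^m\times T_\beta$, factor $\CC_{X_\alpha}$ as an external product, and thereby reduce to the coefficient of $[T^*_{\{0\}}\CC^m]$ in $CC(\CC_{T_{\alpha,\beta}})$, which is computed directly via a generic linear form and \cite[Corollary~3.6]{M-T-new2}.

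What each buys: your route is shorter and more conceptual, but it leans on the ``normal Morse data is well-defined'' principle from stratified Morse theory precisely at the point you flag---the test function for $m_{\alpha,\beta}$ is Morse along $X_\beta$, whereas the one in Definition~\ref{dfn:4-1} vanishes on $T_\beta$. This identification is standard but is not proved in the paper, so your argument imports an external ingredient. The paper's explicit product decomposition sidesteps this entirely: once one is in $\CC^m\times T_\beta$ with $X_\beta=\{0\}\times T_\beta$, a single generic linear form on $\CC^m$ serves both purposes, and no invariance statement beyond the well-definedness of $CC$ itself is needed. If you want a fully self-contained proof in the style of the paper, it is cleanest to insert the product decomposition rather than to justify the microlocal invariance separately.
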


\begin{proof}
Since the coefficients of the characteristic cycle $CC(\F)$ are calculated by vanishing cycles as we explained in Section \ref{sec:pre}, by Proposition \ref{prp:7-2} we have
\begin{equation}
CC(\F)=\dsum_{\Delta_{\alpha} \prec \sigma^{\vee}}\rho(\F)_{\alpha} \cdot CC(\CC_{X_{\alpha}})
\end{equation}
in $T^*Z$. For a face $\Delta_{\beta} \prec \sigma^{\vee}$ of $\sigma^{\vee}$, we will show \eqref{eq-1}. It is enough to prove that for any face $\Delta_{\alpha} \prec \sigma^{\vee}$ of $\sigma^{\vee}$ such that $\Delta_{\beta} \prec \Delta_{\alpha}$ the coefficient $m_{\alpha, \beta} \in \ZZ$ of $\left[\overline{T_{X_{\beta}}^*Z} \right]$ in the characteristic cycle $CC(\CC_{X_{\alpha}})$ of $\CC_{X_{\alpha}} \in \Dbc(Z)$ is given by $m_{\alpha, \beta}= (-1)^{\d \Delta_{\alpha}} \RSV_{\ZZ}(\Delta_{\alpha}, \Delta_{\beta})$. Since in the case $\Delta_{\beta}=\Delta_{\alpha}$ we obtain it easily, it is enough to consider the case $\Delta_{\beta} \neq \Delta_{\alpha}$. From now on, we shall inherit and freely use the notations in the proof of Theorem \ref{thm:4-3}. In particular, in a neighborhood of $X_{\beta}=T_{\beta}$ in $X$ we have
\begin{gather}
\overline{X_{\alpha}}= X_{\alpha, \beta} \times T_{\beta},\\
X=X_{\sigma, \beta} \times T_{\beta}
\end{gather}
and there exists a unique point $q \in X_{\sigma, \beta}$ such that $\{ q\} \times T_{\beta}=T_{\beta}$. Let us take a closed embedding $X_{\sigma, \beta} \longhookrightarrow \CC^m$ by which the point $q \in X_{\sigma, \beta}$ is sent to $0 \in \CC^m$. Since the coefficient $m_{\alpha, \beta}$ in the characteristic cycle $CC(\CC_{X_{\alpha}})$ is independent of the choice of the ambient manifold $Z$, we may replace $Z$ by $Z^{\prime}:=\CC^m \times T_{\beta}$ and compute it in $Z^{\prime}$. Since $X_{\alpha}=T_{\alpha} \simeq T_{\alpha, \beta} \times X_{\beta}$, we obtain an isomorphism
\begin{equation}
\CC_{X_{\alpha}} \simeq (\CC_{T_{\alpha,\beta}}[-\d \Delta_{\beta}])\boxtimes(\CC_{X_{\beta}} [\d \Delta_{\beta}])
\end{equation}
in $\Dbc(Z^{\prime})$. Hence we get
\begin{eqnarray}
CC(\CC_{X_{\alpha}})
&=& CC(\CC_{T_{\alpha,\beta}}[-\d \Delta_{\beta}])\times CC(\CC_{X_{\beta}}[\d \Delta_{\beta}])\\
&=& CC(\CC_{T_{\alpha,\beta}}[-\d \Delta_{\beta}])\times [T^*_{X_{\beta}}X_{\beta}]
\end{eqnarray}
in $T^*Z^{\prime}=T^*(\CC^m) \times T^*X_{\beta}$. Since we have $\overline{T_{X_{\beta}}^*Z^{\prime}} =T^*_{\{0\}}(\CC^m)\times T^*_{X_{\beta}}X_{\beta}$, $m_{\alpha,\beta}$ is equal to the coefficient of $\left[T^*_{\{0\}}(\CC^m)\right]$ in the characteristic cycle $CC(\CC_{T_{\alpha, \beta}}[-\d \Delta_{\beta}])$ of $\CC_{T_{\alpha, \beta}}[-\d \Delta_{\beta}] \in \Dbc(\CC^m)$. Hence by taking a generic linear form $f \colon \CC^m \longrightarrow \CC$ we have
\begin{eqnarray}
m_{\alpha, \beta}
&=& -\chi(\varphi_f(\CC_{T_{\alpha,\beta}}[-\d \Delta_{\beta}])_0)\\
&=& (-1)^{\d \Delta_{\beta}+1}\chi(\varphi_f(\CC_{T_{\alpha, \beta}})_0).
\end{eqnarray}
By applying Theorem \ref{thm:pre-10} to the closed embedding $X_{\alpha, \beta} \longhookrightarrow \CC^m$ we obtain
\begin{equation}
m_{\alpha, \beta}= (-1)^{\d \Delta_{\beta}+1} \chi(\varphi_g(\CC_{T_{\alpha,\beta}})_0),
\end{equation}
where we set $g:=f|_{X_{\alpha,\beta}}$. Note that if $\Delta_{\beta} \subsetneq \Delta_{\alpha}$ the stalk of $\CC_{T_{\alpha,\beta}}$ at $0 \in X_{\alpha,\beta}$ is zero and
\begin{equation}
\chi(\varphi_g(\CC_{T_{\alpha,\beta}})_0)=\chi(\psi_g(\CC_{T_{\alpha,\beta}})_0).
\end{equation}
Finally by \cite[Corollary 3.6]{M-T-new2} (see also the proof of \cite[Chapter 10, Theorem 2.12]{G-K-Z}) we obtain the 
desired formula
\begin{equation}
m_{\alpha, \beta}= (-1)^{\d \Delta_{\alpha}} \RSV_{\ZZ}(\Delta_{\alpha},\Delta_{\beta}).
\end{equation}
This completes the proof.
\end{proof}

By the proof of Theorem \ref{thm:6-7}, we can prove also a similar result for projective toric varieties associated with lattice points. Let $A$ be a finite subset of $M \simeq \ZZ^n$ such that the convex hull $P$ of $A$ in $M_{\RR}$ is $n$-dimensional. We inherit the notations in Section \ref{sec:2} and Section \ref{sec:4}. Let us consider the projective toric variety $X_A \subset Z=\PP^{\sharp A-1}$ associated with $A$. For a face $\Delta_{\alpha}$ of $P$, denote by $X_{\alpha}(=T_{\alpha})$ the $T$-orbit which corresponds to $\Delta_{\alpha}$. Then we obtain a decomposition $X_A=\bigsqcup_{\Delta_{\alpha} \prec P} X_{\alpha}$ of $X_A$ into $T$-orbits.

\begin{theorem}\label{thm:7-4}
Let $X_A \longhookrightarrow Z=\PP^{\sharp A-1}$ be the projective embedding of $X_A$ and $\F \in \Dbc(X_A)$ a $T$-invariant object. Then the coefficients $m_{\beta} \in \ZZ$ in the characteristic cycle
\begin{equation}
CC(\F)=\dsum_{\Delta_{\beta} \prec P}m_{\beta}\left[\overline{T_{X_{\beta}}^*Z} \right]
\end{equation}
are given by the formula
\begin{equation}
m_{\beta}=\dsum_{\Delta_{\beta} \prec \Delta_{\alpha} \prec P }(-1)^{\d \Delta_{\alpha}} \rho(\F)_{\alpha} \cdot i(\Delta_{\alpha}, \Delta_{\beta}) \cdot u(\SS_{\alpha}/\Delta_{\beta})
\end{equation}
(for the definitions of $i(\Delta_{\alpha}, \Delta_{\beta})$ and $u(\SS_{\alpha}/\Delta_{\beta})$ see Definition \ref{dfn:6-5} and \ref{dfn:6-6}).
\end{theorem}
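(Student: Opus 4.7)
The plan is to imitate the proof of Theorem \ref{thm:7-3}, but to replace the local affine-normal analysis used there by the explicit non-normal local model developed in the proof of Theorem \ref{thm:6-7}. First, by Proposition \ref{prp:7-2} and the fact that $CC$ is additive on distinguished triangles, I reduce to the case $\F=\CC_{X_\alpha}$ for a single torus orbit: it suffices to show that the coefficient $m_{\alpha,\beta}$ of $[\overline{T^*_{X_\beta}Z}]$ in $CC(\CC_{X_\alpha})$ equals $(-1)^{\d\Delta_\alpha}\,i(\Delta_\alpha,\Delta_\beta)\cdot u(\SS_\alpha/\Delta_\beta)$ whenever $\Delta_\beta\prec\Delta_\alpha$, and vanishes otherwise. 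The case $\Delta_\beta=\Delta_\alpha$ is immediate, so I may assume $\Delta_\beta\precneqq\Delta_\alpha$.

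Next I pass to a convenient local model. As in the proof of Theorem \ref{thm:6-7}, I replace $X_A\subset\PP^{\sharp A-1}$ by the affine cone ${\rm Cone}(X_A)\simeq\Spec(\CC[\tl{\SS}])$, and after localizing in a neighborhood of ${\rm Cone}(T_\beta)$ I split off the torus factor $\Spec(\CC[\Xi_\beta])$, leaving the transverse slice $Y=\Spec(\CC[(\tl{\SS}+\Xi_\beta)/\Xi_\beta])$ with closed subvariety $Y_\alpha=\Spec(\CC[(\tl{\SS}_\alpha+\Xi_\beta)/\Xi_\beta])$ meeting the orbit slice in a single point $q$. Because the coefficient $m_{\alpha,\beta}$ is independent of the smooth ambient space, I embed $Y\longhookrightarrow\CC^m$ sending $q$ to $0$, use $Z'=\CC^m\times T_\beta$ as the ambient manifold, and invoke the K\"unneth-type factorization
\begin{equation}
\CC_{X_\alpha}\simeq(\CC_{T_{\alpha,\beta}}[-\d\Delta_\beta])\boxtimes(\CC_{X_\beta}[\d\Delta_\beta])
\end{equation}
exactly as in the proof of Theorem \ref{thm:7-3}. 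This reduces the computation of $m_{\alpha,\beta}$ to that of $(-1)^{\d\Delta_\beta+1}\chi(\varphi_f(\CC_{W_\alpha})_0)$ for a generic linear form $f$ on $\CC^m$, where $W_\alpha\subset Y_\alpha$ is the open dense locus $\Spec(\CC[\Xi_\alpha/\Xi_\beta])$.

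The essential new input relative to Theorem \ref{thm:7-3} is the non-normality encoded in the index $l:=i(\Delta_\alpha,\Delta_\beta)$. Following the proof of Theorem \ref{thm:6-7}, I decompose $W_\alpha$ into $l$ copies of the torus $T_0\simeq(\CC^*)^{\d\Delta_\alpha-\d\Delta_\beta}$ via the $l$ homomorphisms $I_k^\prime$, and I extend each embedding of $T_0$ to a closed embedding $\iota_k\colon Z_\alpha=\Spec(\CC[\SS_\alpha/\Delta_\beta])\longhookrightarrow Y_\alpha$ using $I_k$. By the compatibility diagram recorded in the proof of Theorem \ref{thm:6-7}, one has $\bigoplus_{k=0}^{l-1}(\iota_k)_*\CC_{T_0}\simeq\CC_{W_\alpha}$ in $\Dbc(Y_\alpha)$. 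Applying Theorem \ref{thm:pre-10} to each $\iota_k$ and then to the embedding $Y_\alpha\longhookrightarrow\CC^m$ reduces the problem to $l$ identical Euler characteristic computations
\begin{equation}
\chi\bigl(\varphi_{g_k}(\CC_{T_0})_0\bigr)=\chi\bigl(\psi_{g_k}(\CC_{T_0})_0\bigr)
\end{equation}
for the restrictions $g_k=g\circ\iota_k\in\CC[\SS_\alpha/\Delta_\beta]$ of the linear form to each branch (the stalk $(\CC_{T_0})_0$ is zero since $0\notin T_0$).

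The final step is to evaluate $\chi(\psi_{g_k}(\CC_{T_0})_0)$, which by genericity of $f$ (and hence of each $g_k$) is precisely the situation covered by \cite[Corollary 3.6]{M-T-new2} (or, equivalently, the Bernstein--Khovanskii--Kushnirenko-type calculation used in the proof of \cite[Chapter 10, Theorem 2.12]{G-K-Z}). This gives $\chi(\psi_{g_k}(\CC_{T_0})_0)=(-1)^{\d\Delta_\alpha-\d\Delta_\beta-1}\,u(\SS_\alpha/\Delta_\beta)$ independently of $k$. Summing over the $l$ branches and combining with the shift factor $(-1)^{\d\Delta_\beta+1}$ yields
\begin{equation}
m_{\alpha,\beta}=(-1)^{\d\Delta_\alpha}\,i(\Delta_\alpha,\Delta_\beta)\cdot u(\SS_\alpha/\Delta_\beta),
\end{equation}
which upon summing over $\alpha$ via Proposition \ref{prp:7-2} produces the claimed formula. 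I expect the main obstacle to be a bookkeeping one rather than a conceptual one: verifying that the branch decomposition $\bigoplus(\iota_k)_*\CC_{T_0}\simeq\CC_{W_\alpha}$ is genuinely a decomposition in $\Dbc(Y_\alpha)$ (not merely in $\Dbc(W_\alpha)$), so that Theorem \ref{thm:pre-10} may be applied separately to each $\iota_k$ before one invokes \cite[Corollary 3.6]{M-T-new2} on the standard toric model $Z_\alpha$.
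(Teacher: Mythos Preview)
Your proposal is correct and follows precisely the route the paper intends: the paper itself omits the proof, stating only that it is ``similar to that of Theorem \ref{thm:7-3}'', and what you have written is exactly that adaptation---you reduce to $\F=\CC_{X_\alpha}$ via Proposition \ref{prp:7-2}, pass to the local model of the proof of Theorem \ref{thm:6-7} in place of that of Theorem \ref{thm:4-3}, split off the torus factor, decompose $\CC_{W_\alpha}$ into the $l=i(\Delta_\alpha,\Delta_\beta)$ branches $(\iota_k)_*\CC_{T_0}$, and finish with \cite[Corollary 3.6]{M-T-new2}. The only minor slip is notational: once you have passed to the affine cone, the torus factor you split off is ${\rm Cone}(T_\beta)\simeq(\CC^*)^{\d\Delta_\beta+1}$ rather than $T_\beta$, so the shift in the K\"unneth decomposition should be $[\pm(\d\Delta_\beta+1)]$; this does not affect the final sign since the linking number $l_{\alpha,\beta}$ is insensitive to whether one computes it on $X_A$ or on its cone.
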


Since the proof of this theorem is similar to that of Theorem \ref{thm:7-3}, we omit it.

\begin{example}
Assume that the finite set $A=\{ \alpha(1), \alpha(2), \ldots , \alpha(m+1)\} \subset \ZZ^n$ generates $M=\ZZ^n$. For $j=1,2, \ldots , m+1$, set $\tl{\alpha(j)}:=(\alpha(j),1) \in \ZZ^{n+1}$ and consider the $(n+1)\times (m+1)$ integer matrix
\begin{equation}
\tl{A}:=\begin{pmatrix} {^t\tl{\alpha(1)}} & {^t\tl{\alpha(2)}} &\cdots& {^t\tl{\alpha(m+1)}} \end{pmatrix}=(a_{ij}) \in M(n+1, m+1; \ZZ)
\end{equation}
whose $j$-th column is ${^t\tl{\alpha(j)}}$. For $\gamma \in \CC^{n+1}$, we set 
\begin{eqnarray}
P_i&:=&\sum_{j=1}^{m+1} a_{ij}x_j\dfrac{\partial}{\partial x_j}-\gamma_i \hspace{5mm}(1\leq i\leq n+1),\\
\square_b&:=&\prod_{b_j>0}\left(\dfrac{\partial}{\partial x_j}\right)^{b_j} -\prod_{b_j<0} \left(\dfrac{\partial}{\partial x_j}\right)^{-b_j}\hspace{5mm} (b\in {\rm Ker} \tl{A} \cap \ZZ^{m+1}).
\end{eqnarray}
Then the GKZ hypergeometric system on $\CC_x^{m+1}$ associated with $\tl{A}$ and the parameter $\gamma \in \CC^{n+1}$ is
\begin{equation}
\begin{cases}
P_if(x)=0 &(1\leq i\leq n+1), \\
\square_b f(x)=0 & (b\in {\rm Ker} \tl{A} \cap \ZZ^{m+1})
\end{cases}
\end{equation}
(see \cite{G-K-Z-1} and \cite{G-K-Z-2}). Let $\D_{\CC_x^{m+1}}$ be the sheaf of differential operators with holomorphic coefficients on $\CC_x^{m+1}$. Then the coherent $\D_{\CC_x^{m+1}}$-module
\begin{equation}
\M_{A, \gamma}:=\D_{\CC_x^{m+1}}\Bigg/\left(\sum_{1 \leq i \leq n+1} \D_{\CC_x^{m+1}} P_i + \sum_{b\in {\rm Ker} \tl{A} \cap \ZZ^{m+1}}\D_{\CC_x^{m+1}} \square_b\right)
\end{equation}
which corresponds to the above GKZ system is holonomic. Let $\CC^{m+1}_{\xi}$ be the dual vector space of $\CC^{m+1}_x$ and $\M_{A, \gamma}^{\wedge}$ the Fourier transform of $\M_{A, \gamma}$ on $\CC^{m+1}_{\xi}$ (see \cite{H-T-T} etc.). We denote by $S_0$ the image of the map $\Psi_{\tl{A}} \colon (\CC^*)^{n+1} \longrightarrow \CC^{m+1}_{\xi}$ defined by $\Psi_{\tl{A}}(y)=(y_1^{\tl{\alpha(1)}}, y_2^{\tl{\alpha(2)}}, \ldots , y_{m+1}^{\tl{\alpha(m+1)}})$ and let $\iota \colon S_0 \longhookrightarrow \CC^{m+1}_{\xi}$ be the inclusion. Then $S_0 \subset \CC^{m+1}_{\xi}$ is the cone over the open dense $T$-orbit in $X_A \subset \PP^m$. In \cite{G-K-Z-2}, for a local system $\L$ of rank one on $S_0$ Gelfand-Kapranov-Zelevinsky constructed a morphism
\begin{equation}\label{eq:5-26}
j_! \L [n+1] \longrightarrow \Rhom_{\D_{\CC_{\xi}^{m+1}}}(\M_{A,\gamma}^{\wedge}, \O_{\CC_{\xi}^{m+1}})[m+1]
\end{equation}
in ${\rm Perv}(\CC^{m+1}_{\xi})$. In \cite[Theorem 4.6]{G-K-Z-2}, by calculating the characteristic cycles of both sides of \eqref{eq:5-26}, they proved that \eqref{eq:5-26} is an isomorphism if $\gamma \in \CC^{n+1}$ is generic (non-resonant in the sense of \cite[Theorem 4.6]{G-K-Z-2}). For each face $\Delta \prec P$ let $V_0(\Delta) \subset (\PP^m)^*$ be the dual variety of the closure $\overline{T_{\Delta}} \subset \PP^m$ of the $T$-orbit in $X_A$ which corresponds to $\Delta$ and denote by $V(\Delta) \subset \CC_x^{m+1}$ the cone over $V_0(\Delta) \subset (\PP^m)^*$. Then by Theorem \ref{thm:7-4} we have
\begin{equation}
CC( j_! \L [n+1] ) =\Vol_{\ZZ}(P)\left[T^*_{\CC_x^{m+1}} \CC_x^{m+1} \right] + \sum_{\Delta \prec P}i(P, \Delta) \cdot u(\Delta) \left[\overline{T_{V(\Delta)_{\reg}}^* \CC_x^{m+1} } \right]
\end{equation}
in $T^* \CC^{m+1}_{\xi} \simeq T^* \CC^{m+1}_{x}$, where for $\Delta_{\beta}=\Delta \prec \Delta_{\alpha}=P$ we set $u(\SS_{\alpha}/\Delta_{\beta})=: u(\Delta) \in \ZZ_{\geq 1}$. Moreover if $\gamma \in \CC^{n+1}$ is non-resonant, by the proof of \cite[Theorem 5]{G-K-Z-1} and \eqref{eq:4-53} we can show that the characteristic cycle $CC(\M_{A, \gamma})=CC(\M_{A, \gamma}^{\wedge})$ $=CC(\Rhom_{\D_{\CC_{\xi}^{m+1}}}(\M_{A, \gamma}^{\wedge}, \O_{\CC_{\xi}^{m+1}})[m+1])$ has the same expression. It seems that the integers $i(P, \Delta)$ are forgotten in \cite[Theorem 5]{G-K-Z-1}. Recently in \cite[Theorem 4.21]{S-W} Schulze and Walther proved it in the wider case where $\gamma$ is not rank-jumping.
\end{example}

From now on, we shall apply Theorem \ref{thm:7-3} to the intersection cohomology complexes on projective toric varieties. Let $M \simeq \ZZ^n$ be a $\ZZ$-lattice of rank $n$ and $N$ its dual lattice. Let $P$ be an integral polytope in $M_{\RR}$ such that $\d P=n=\d M_{\RR}$ and $\Sigma_P$ its normal fan in $N_{\RR}$ (see Subsection \ref{sec:4-2}). Denote by $X_{\Sigma_P}$ the (normal) toric variety associated with $\Sigma_P$. Then by \cite[Theorem 2.13]{Oda}, if $P$ is sufficiently large and $A=P \cap M$, the natural morphism $\Phi_A \colon X_{\Sigma_P} \longrightarrow \PP^{\sharp A -1}$ induces an isomorphism $X_{\Sigma_P} \simto X_A$. Let us consider the intersection cohomology complex $IC_{X_A} \in \Dbc(X_A)$ of such a projective toric variety $X_A \simeq X_{\Sigma_P} \subset \PP^{\sharp A-1}$. For simplicity, we set $X:=X_A$ and $Z:=\PP^{\sharp A-1}$. For a face $\Delta_{\alpha} \prec P$ of $P$, denote by $X_{\alpha}$ the $T$-orbit in $X$ which corresponds to $\Delta_{\alpha}$. Then $\F=IC_X[n] \in \Dbc(X)$ is a $T$-equivariant perverse sheaf on $X$. Considering $\F$ as a perverse sheaf on $Z=\PP^{\sharp A-1}$ via the embedding $X \longhookrightarrow Z$, we obtain the following results. For $\Delta_{\beta} \prec \Delta_{\alpha} \prec P$, we set $V_{\alpha,\beta}:=\RSV_{\ZZ}(\Delta_{\alpha}, \Delta_{\beta})$ and $V_{P,\beta}:=\RSV_{\ZZ}(P,\Delta_{\beta})$ for short.

\begin{example}\label{exa:5-6}
For $n=2, 3, 4$ the characteristic cycle of $\F=IC_X[n]\in {\rm Perv}(Z)$ in $T^*Z$ is given by
\begin{enumerate}
\item $n=2$:
\begin{equation}
CC(\F)=[T_T^*Z]+\dsum_{\begin{subarray}{c} \Delta_{\beta} \prec P\\ \d \Delta_{\beta} =0\end{subarray}} ( V_{P,\beta}-1)[T_{X_{\beta}}^*Z],
\end{equation}
\item $n=3$:
\begin{eqnarray}
\lefteqn{CC(\F)}\nonumber \\
&=&[T_T^*Z] +\dsum_{\begin{subarray}{c} \Delta_{\beta} \prec P\\ \d \Delta_{\beta}=1 \end{subarray}} (V_{P, \beta}-1) [T_{X_{\beta}}^*Z] + \dsum_{\begin{subarray}{c} \Delta_{\beta} \prec P \\ \d \Delta_{\beta}=0\end{subarray}} \left\{ V_{P,\beta}- \dsum_{\begin{subarray}{c} \Delta_{\beta} \prec \Delta_{\alpha} \prec P\\ \d \Delta_{\alpha}=2 \end{subarray}} V_{\alpha,\beta} +2\right\}[T_{X_{\beta}}^*Z],\nonumber\\
\end{eqnarray}
\item $n=4$:
\begin{eqnarray}
\lefteqn{CC(\F)}\nonumber \\
&=&[T_T^*Z] +\dsum_{\begin{subarray}{c} \Delta_{\beta} \prec P\\ \d \Delta_{\beta}=2 \end{subarray}} (V_{P,\beta}-1) [T_{X_{\beta}}^*Z] + \dsum_{\begin{subarray}{c} \Delta_{\beta} \prec P \\ \d \Delta_{\beta}=1\end{subarray}} \left\{ V_{P, \beta} -\hspace*{-2mm}\dsum_{\begin{subarray}{c} \Delta_{\beta} \prec \Delta_{\alpha} \prec P\\ \d \Delta_{\alpha}=3 \end{subarray}}V_{\alpha,\beta} +2\right\}[T_{X_{\beta}}^*Z]\nonumber \\
& &+\dsum_{\begin{subarray}{c} \Delta_{\beta} \prec P \\ \d \Delta_{\beta}=0\end{subarray}} \left\{ V_{P,\beta}-\hspace*{-2mm}\dsum_{\begin{subarray}{c} \Delta_{\beta} \prec \Delta_{\alpha} \prec P\\ \d \Delta_{\alpha}=3\end{subarray}} (V_{\alpha,\beta} +1)+\hspace*{-2mm}\dsum_{\begin{subarray}{c} \Delta_{\beta} \prec \Delta_{\alpha} \prec P\\ \d \Delta_{\alpha}=2\end{subarray}} V_{\alpha,\beta} +1\right\}[T_{X_{\beta}}^*Z].
\end{eqnarray}
\end{enumerate}
\end{example}

Note that (i) is a consequence of the main result of Gonzalez-Sprinberg \cite{G-S}. Also (i) and (ii) can be deduced from Theorem \ref{thm:7-3} and the combinatorial formula for the intersection cohomology complex $IC_X \in \Dbc(X)$ proved by Fieseler \cite{Fieseler} etc. We leave the proof to the reader. By (i) and a result of Gonzalez-Sprinberg \cite{G-S}, we obtain the following.

\begin{corollary}\label{cor:7-8}
If $n=2$, then the following three conditions are equivalent.
\begin{enumerate}
\item $X=X_A \simeq X_{\Sigma_P}$ is smooth.
\item $\Eu_X\equiv 1$ on $X$.
\item The characteristic cycle $CC(\F)$ of $\F=IC_X[n]$ is irreducible.
\end{enumerate}
\end{corollary}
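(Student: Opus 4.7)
The strategy is to show that conditions (ii) and (iii) both reduce to the combinatorial requirement $V_{P,\beta}=1$ for every vertex $\Delta_\beta$ of $P$, and then to invoke Gonzalez-Sprinberg's computation of Euler obstructions on $2$-dimensional toric surfaces for (i)$\Leftrightarrow$(ii). First I would check that for $n=2$ the Euler obstruction is automatically $1$ on every torus orbit of positive dimension. On the top orbit $T\simeq T_P$ this holds by definition, while for an edge $\Delta_\gamma\prec P$ Corollary \ref{cor:6-3-1}(ii) specializes to $\Eu(\Delta_\gamma)=\RSV_\ZZ(P,\Delta_\gamma)\Eu(P)$. Unwinding Definition \ref{dfn:4-2} via the dilation bijection $\RR_{>0}(P-v)=\sigma^\vee$ at a vertex $v\in\Delta_\gamma$, the relevant projection $p_{\Delta_\gamma}(\sigma^\vee)$ is a $1$-dimensional half-line whose subdiagram part is the interval $[0,w]$ from the origin to the primitive generator of $p_{\Delta_\gamma}(M)$, of normalized volume $1$. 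Hence $\Eu(\Delta_\gamma)=1$.

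At a vertex $\Delta_\beta=\{v\}$ with incident edges $e_1,e_2$, the same recursive formula combined with an analogous $1$-dimensional calculation giving $\RSV_\ZZ(e_i,\Delta_\beta)=1$ yields
\begin{equation*}
\Eu(\Delta_\beta)\;=\;\RSV_\ZZ(e_1,\Delta_\beta)+\RSV_\ZZ(e_2,\Delta_\beta)-\RSV_\ZZ(P,\Delta_\beta)\;=\;2-V_{P,\beta}.
\end{equation*}
Thus condition (ii) is equivalent to $V_{P,\beta}=1$ for every $0$-dimensional face $\Delta_\beta$. By Example \ref{exa:5-6}(i) the characteristic cycle has the shape
\begin{equation*}
CC(\F)\;=\;[T_T^*Z]\;+\;\dsum_{\dim\Delta_\beta=0}(V_{P,\beta}-1)\bigl[T_{X_\beta}^*Z\bigr],
\end{equation*}
whose irreducibility is equivalent to the vanishing of every coefficient $V_{P,\beta}-1$, i.e.\ again to $V_{P,\beta}=1$ for every vertex. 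This proves (ii)$\Leftrightarrow$(iii).

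For (i)$\Leftrightarrow$(ii) I would invoke Gonzalez-Sprinberg's theorem \cite{G-S}: on a $2$-dimensional normal toric variety, the Euler obstruction at a torus-fixed point equals $1$ if and only if the associated maximal cone of $\Sigma_P$ is regular, which is precisely the smoothness criterion for $X$ at that point; since $X$ is automatically smooth along every orbit of positive dimension, this settles the equivalence. The main subtlety I anticipate is keeping the identification of faces of $P$ containing $v$ with faces of $\sigma^\vee$ (under the dilation bijection) entirely consistent across the two recursive uses of Definition \ref{dfn:4-2}, and verifying rigorously that each $1$-dimensional projection appearing in these computations produces a subdiagram of normalized volume exactly $1$.
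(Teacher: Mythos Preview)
Your proposal is correct and follows essentially the same route as the paper, which gives no detailed argument and simply cites Example \ref{exa:5-6}(i) together with Gonzalez-Sprinberg \cite{G-S}. Your derivation of $\Eu(\Delta_\beta)=2-V_{P,\beta}$ from Corollary \ref{cor:6-3-1} makes explicit what the paper leaves to the reader; the only small point worth adding is that irreducibility of $CC(\F)$ really forces each $V_{P,\beta}-1$ to vanish because $\F=IC_X[n]$ is perverse and hence all multiplicities are non-negative (as noted in Section \ref{sec:pre}), or equivalently because $V_{P,\beta}\geq 1$ always.
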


Motivated by our calculations in the dimensions $n=2,3$ and $4$, we conjecture that the same equivalence would hold also for higher dimensions $n \geq 3$.

\bibliographystyle{amsplain}

\end{document}